\documentclass[11pt,oneside]{amsart}

\usepackage{amsfonts,amsmath,amssymb,amsthm}
\usepackage{geometry}
\usepackage{graphicx,xcolor}
\usepackage[hidelinks]{hyperref}

\usepackage[alphabetic,abbrev]{amsrefs} 
\renewcommand{\MR}[1]{}

\usepackage{zref-clever} \newcommand\Cref[1]{\zcref[S]{#1}}

\newcommand\newtheoremx[2]{%
\AddToHook{env/#1/begin}{
\zcsetup{countertype={theorem=#1}}}
\newtheorem{#1}[theorem]{#2}
}

\newtheorem{theorem}{Theorem}[section]
\newtheoremx{claim}{Claim}
\newtheoremx{proposition}{Proposition}
\newtheoremx{corollary}{Corollary}
\newtheoremx{lemma}{Lemma}
\newtheoremx{conjecture}{Conjecture}

\newtheorem{introtheorem}{Theorem}

\zcsetup{countertype={introtheorem=theorem}}

\theoremstyle{definition}
\newtheoremx{definition}{Definition}
\newtheoremx{notation}{Notation}
\newtheoremx{assumption}{Assumption}

\theoremstyle{remark}
\newtheoremx{example}{Example}
\newtheoremx{remark}{Remark}
\newtheoremx{question}{Question}

\title{An abelian envelope without the quotient property}

\author{Johannes Flake}
\address{Mathematical Institute, University of Bonn, Bonn, Germany}
\email{flake@math.uni-bonn.de}

\author{Jonathan Gruber}
\address{Chair of Algebra and Representation Theory, RWTH Aachen University, Aachen, Germany}
\email{gruber@art.rwth-aachen.de}

\author{Thorsten Heidersdorf}
\address{School of Mathematics, Statistics and Physics, Newcastle University, Newcastle, United Kingdom}
\email{thorsten.heidersdorf@newcastle.ac.uk}

\date{\today}

\newcommand\kk{\Bbbk}
\newcommand\EE{\mathbb E}

\newcommand\cA{\mathcal A}
\newcommand\cB{\mathcal B}
\newcommand\cC{\mathcal C}
\newcommand\cD{\mathcal D}
\newcommand\cM{\mathcal M}
\newcommand\cP{\mathcal P}
\newcommand\cQ{\mathcal Q}
\newcommand\cR{\mathcal R}
\newcommand\cT{\mathcal T}
\newcommand\cU{\mathcal U}
\newcommand\ZZ{\mathbb Z}

\newcommand\id{\operatorname{id}}
\newcommand\ev{\operatorname{ev}}

\newcommand\co{\operatorname{co}}
\newcommand\im{\operatorname{im}}
\newcommand\Fun{\operatorname{Fun}}
\newcommand\PSh{\operatorname{PSh}}
\newcommand\Vect{\operatorname{Vec}}
\newcommand\Comod{\operatorname{Comod}}
\newcommand\Hom{\operatorname{Hom}}
\newcommand\Tilt{\operatorname{Tilt}}

\newcommand\tforall{\text{for all }}
\renewcommand\o\otimes
\renewcommand\:\colon
\newcommand\op{^{\mathrm{op}}}
\newcommand\fd{^{\mathrm{fd}}}
\newcommand\one{\mathbf 1}
\newcommand\tto{\twoheadrightarrow}

\begin{document}

\begin{abstract}
We show that the universal rigid monoidal category on one object has an abelian envelope.
Since it was proven by Coulembier--Etingof--Ostrik--Pauwels \cite{CEOP} that this category cannot have an abelian envelope with the quotient property, this disproves the conjecture in [op.cit.] saying that every abelian envelope has the quotient property. Monoidal Ringel duality \cite{FG} yields a candidate envelope as a lower finite highest weight category.
Our proof that this is an abelian envelope relies on Coulembier--Etingof's \cite{CE} continuants to show the required universal property. AI was used to find these results.
\end{abstract}

\maketitle

\section{Introduction}

Tensor categories are an axiomatically defined class of categories modeled after categories of representations of finite or algebraic groups. In particular, they lie in the intersection of the classes of abelian categories and rigid monoidal categories. Replacing abelian by additive and idempotent complete (i.e., \emph{pseudo-abelian}) categories leads to the notion of \emph{pseudo-tensor categories}, which encompass additionally categories of projective representations (after adjoining the tensor unit) for finite groups, categories of tilting representations for reductive groups, and many diagrammatic monoidal categories, for instance. An abelian envelope of such a category $\cA$ is a fully faithful monoidal embedding into a tensor category $\cB$ with the following universal property: faithful linear monoidal functors from $\cA$ to tensor categories extend uniquely to exact linear monoidal functors from $\cB$.

The theory of abelian envelopes is at this point still being developed, see \cites{Cou,BEO,CEAH,CEOP,Cou-hom,FLP-abenv}. Constructing an abelian envelope for a given pseudo-tensor category can be a difficult problem and it is known that not every pseudo-tensor category admits an abelian envelope. In \cite{CEOP}, a characterization of abelian envelopes with a \emph{quotient property} is obtained. These are abelian envelopes in which every object is a quotient of an object of the pseudo-tensor subcategory. It is conjectured \cite{CEOP}*{Conjecture~2.2.2} that every abelian envelope is of this kind. We show that this is not the case.

By \cite{CEOP}*{Example~3.2.6}, the universal rigid monoidal linear category $\cP$ on one object, or equivalently, its pseudo-abelian completion $\cA$, which can be regarded as the universal pseudo-tensor category on one object, cannot have an abelian envelope with the quotient property. We show:

\begin{introtheorem} \label{intro-a} $\cA$ has an abelian envelope $\cB$. Moreover, $\cB$ is a lower finite highest weight category in the sense of \cite{BS}, and $\cA$ can be identified with its category of tilting objects.
\end{introtheorem}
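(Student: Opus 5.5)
The plan has three parts: build the candidate $\cB$ by monoidal Ringel duality, identify $\cA$ with its tilting objects, and then check the universal property, which is the real work.

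\textbf{Construction of $\cB$.} Following \cite{FG}, I regard $\cA$ as a pseudo-tensor category whose indecomposable objects are indexed by a suitable poset of weights. The natural choice is words or bipartitions recording tensor powers of $X$ and $X^*$, ordered by length. I would check that $\cA$ satisfies the hypotheses of monoidal Ringel duality. Concretely, I want a filtration by ideals such that the indecomposables behave like tilting objects of a lower finite highest weight category; the nilpotent parts of the relevant endomorphism algebras should be controlled by the length grading. I then take $\cB$ to be the category of finitely presented (locally finite) modules over the Ringel dual of $\cA$, as in \cite{BS}. By \cite{FG}, $\cB$ is a lower finite highest weight category and carries a monoidal structure extending that of $\cA$. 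The tensor product is exact because objects with standard and costandard filtrations are closed under $\o$ and every object has finite resolutions by tiltings. Rigidity follows because tiltings are rigid and duality is exact on $\cB$. This step also identifies $\cA$ with $\Tilt(\cB)$, which gives the second and third assertions of the theorem, provided $\Hom$-finiteness and $\operatorname{End}(\one)=\kk$ hold. Those two facts are clear for $\cP$.

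\textbf{Universal property.} Let $F\colon\cA\to\cT$ be a faithful linear monoidal functor to a tensor category. Every object of $\cB$ has a finite tilting resolution, but objects need not be quotients of tiltings; this is exactly the failure of the quotient property. So I would extend $F$ through the homotopy category: define $\tilde F(M)$ as the unique nonzero homology of $F$ applied to a tilting complex representing $M$. For this to work, $F$ must send the tilting resolutions of standard and costandard objects to complexes that are exact except in one degree. Given that, $\tilde F$ is well defined and exact by the usual $t$-structure argument: the bounded homotopy category of $\cA$ is equivalent to $D^b(\cB)$, and $F$ induces a $t$-exact functor. Monoidality and uniqueness then follow from the construction.

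\textbf{Main obstacle.} The hardest step is proving that acyclicity, i.e.\ that the complexes $F(T_\bullet)$ attached to standard objects have homology in a single degree, for an \emph{arbitrary} faithful $F$. The defining complexes should involve the morphisms $\ev$, $\co$ and their composites, so the key input is a nondegeneracy statement in $\cT$. Here I would use the continuants of \cite{CE}. Faithfulness forces the dimension-type invariants of $F(X)$ (traces of the endomorphisms built from $\ev\circ\co$ words) to avoid the zero loci of the continuant polynomials. Nonvanishing of the continuants then makes the relevant maps in $\cT$ monomorphisms or epimorphisms, and I would conclude exactness inductively on the length of weights. I expect this to be the delicate part: matching the exact form of the complexes to the continuant recursions, and showing that faithfulness rules out every degeneracy.
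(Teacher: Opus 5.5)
Your architecture matches the paper's. You obtain $\cB$ by monoidal Ringel duality with $\Tilt\cB\simeq\cA$, extend a faithful $F$ termwise through $D^b(\cB)\simeq K^b(\cA)$, and get $t$-exactness by showing that $F$ sends the tilting complexes of all standard and costandard objects to complexes with cohomology in degree $0$. Note that it must be degree $0$, not merely some single degree. The gap is the mechanism you propose for that key step, which does not exist here. $\cP$ is rigid but neither pivotal nor braided, and composition never produces closed loops. You cannot compose $\co_X\colon\one\to X\o X^*$ with $\ev_X\colon X^*\o X\to\one$ to get a scalar, $X^{**}\not\cong X$, and $F(X_0)$ has no dimension or trace in a general tensor category $\cT$. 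So there are no ``continuant polynomials'' whose zero loci faithfulness could avoid. The continuants $\EE_n(X)$ of \cite{CE} are complexes, with $\EE_n(X)_0=X^{*(n-1)}\o\dots\o X$ and evaluation differentials. The relevant input is homological: by \cite{CE}*{Theorem~4.2.3}, if $\EE_n(X)\neq0$ in $D^b$ for all $n$, its cohomology is concentrated in degree $0$. Faithfulness of $F$ alone does not give $H^0F(\EE_n(X))\neq0$, because $H^0\EE_n(X)$ is not an object of $\cA$.

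Two ideas are missing. First, the standard objects must be identified with continuant cohomology. One has $\Delta(w)=\bigcap_{j\in I_\cup(w)}\ker\ev_j\subset T(w)$, so $\EE_n(X_s)\simeq\Delta(w^-_{s,n})[0]$ in $D^b(\cB)$. Moreover, every $\Delta(w)$ is the tensor product of the $\Delta$'s of the descending blocks of $w$, and every $\nabla(w)$ is the tensor product of the $\nabla(w^+_{s,n})\cong\Delta(w^-_{s-1,n})^*$ over its ascending blocks. Thus continuant cohomologies, tensor products and duals cover all (co)standard objects, and no induction on length is involved. Second, for faithful $F$ one shows that $K_n=H^0\EE_n(X)$ is a quotient in $\cB$ of some $T_n\in\cA$. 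This goes by induction on $n$, using the triangle $\EE_{n-1}(X)^*\o\EE_{n+1}(X)\to\EE_n(X)^*\o\EE_n(X)\xrightarrow{\ev}\one$ of \cite{CE}*{Proposition~2.2.5}. It also uses that quotients of objects of $\cA$ are closed under tensor products, double duals, and kernels of epimorphisms onto $\one$; the last follows from exactness of $T\o T\to T\to\one$, \cite{CEOP}*{Proposition~2.3.2}. Then $T_n\tto K_n\hookrightarrow\EE_n(X)_0$ is a nonzero morphism of $\cA$ killed by the differential. Its image under $F$ is therefore nonzero and lands in $H^0F(\EE_n(X))$, and \cite{CE}*{Theorem~4.2.3} applied in $\cT$ finishes the argument.

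Your construction step is also too loose. The weights are all finite integer sequences (words in the $X_i$, $i\in\ZZ$), not words in $X,X^*$ or bipartitions. With those weights the length order is not lower finite; the paper instead uses the order generated by deleting adjacent pairs $(i,i\pm1)$. Exactness of $\o$ on $\cB$ is also not automatic, since closure of standard-filtered objects under $\o$ is exactly what must be established. In the paper this comes from the hypotheses of \cite{FG}*{Theorem~4.9}: the monoidal triangular structure given by cup-free and cap-free diagrams with unique factorization, and exactness of Day convolution on $\PSh(\cU)$.
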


Our proof relies on the theory of \emph{continuants} as developed in \cite{CE}, combined with the theory of monoidal Ringel duality developed in \cites{SS,FG}. We present a general sufficient criterion to detect abelian envelopes, roughly as follows:

\begin{introtheorem}[\Cref{thm:main-general}] \label{intro-b} Let $\cB$ be a lower finite highest weight category that is also a tensor category. Assume that its tilting objects form a pseudo-tensor subcategory $\cA$. Then $\cB$ is the abelian envelope of $\cA$ if the tilting objects together with the cohomologies of their unbounded continuants generate all standard and costandard objects under isomorphism, tensor products, duals,  direct sums and summands, and extensions.
\end{introtheorem}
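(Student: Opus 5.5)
Given a faithful linear monoidal functor $F\colon\cA\to\cT$ into a tensor category $\cT$, we must produce an exact monoidal extension $\tilde F\colon\cB\to\cT$, unique up to isomorphism. Since $\cB$ is lower finite highest weight, every object has finite length and lies in a finite Serre subcategory with enough tiltings. Hence every object $X$ of $\cB$ has a finite tilting resolution, a complex $T_\bullet\to X$ of tilting objects that is exact except at degree zero; dually, there are tilting coresolutions. The natural candidate is therefore
\[
\tilde F(X):=H^0(F(T_\bullet)).
\]
Equivalently, we can view $\cB$ as a Serre-type quotient of the category of finitely presented $\cA$-modules, and extend $F$ by right exactness. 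Uniqueness then follows from exactness together with this presentation of every object by tiltings.

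\textbf{Step 1: well-definedness and exactness reduce to acyclicity.} We need $F$ to send acyclic bounded complexes of tiltings to acyclic complexes in $\cT$. If this holds, standard homotopy-category arguments give the following. First, $\tilde F$ is independent of the chosen resolution and is functorial. Second, $\tilde F$ is exact, via a horseshoe argument with tilting resolutions. Third, $\tilde F$ is monoidal, because tensor products of tilting complexes are tilting complexes and $\cB$ is a tensor category, so the tensor product of resolutions is still exact. Faithfulness alone does not give this acyclicity. This is where the continuants of \cite{CE} enter. For a morphism $f$ in $\cA$, the unbounded continuant is the periodic-type complex built from $f$ and its duals. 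Coulembier--Etingof show that a faithful monoidal functor into a tensor category preserves exactness of, or more precisely identifies the cohomology of, such continuants whenever they are "determined inside $\cA$". Concretely, I would use their result that $F$ maps the cohomology objects of unbounded continuants (computed in $\cB$) to the corresponding cohomology in $\cT$. This defines $\tilde F$ on those cohomology objects, compatibly with tensor products and duals.

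\textbf{Step 2: propagate along the generation hypothesis.} Let $\cS$ be the class of objects $X\in\cB$ with the following property: for every (co)resolution of $X$ by tiltings, and for all relevant degree shifts, the image under $F$ has the expected cohomology, so that $F$ is "exact on $X$". We show that $\cS$ contains the tiltings (trivially) and the continuant cohomologies (Step 1). We also show that $\cS$ is closed under tensor products and duals, using monoidality of $F$ and rigidity; under direct sums and summands; and under extensions, using the long exact sequence and the five lemma on the level of complexes. By hypothesis, $\cS$ then contains all standard and costandard objects. Now an acyclic bounded complex of tiltings is filtered by complexes with standard or costandard cohomological pieces. Equivalently, a tilting object admits $\Delta$- and $\nabla$-flags, and the acyclicity of a tilting complex can be tested via $\Hom(\Delta,-)$ and $\Hom(-,\nabla)$ against these. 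Using this, we deduce that $F$ preserves acyclicity of all bounded tilting complexes. This completes Step 1's requirement.

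\textbf{Main obstacle.} The delicate point is the last deduction: passing from "$F$ behaves well on the generating objects $\Delta(\lambda)$, $\nabla(\lambda)$" to "$F$ preserves acyclicity of arbitrary bounded tilting complexes". The reason is that $F$ is only defined on $\cA$, not on $\cB$. I would first try to handle this through the derived equivalence $K^b(\cA)\simeq D^b(\cB)$ restricted to finite ideals. Under it, acyclic complexes correspond to zero objects. For standard objects, the membership $\Delta\in\cS$ should give a well-defined exact functor on the thick subcategory they generate. Then $D^b(\cB)$ being generated by standards finishes the argument. The risk is checking that closure under extensions within $\cS$ is genuinely compatible with this triangulated picture rather than only at the level of objects.
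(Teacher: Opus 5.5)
Your proposal has a genuine gap. It locates the difficulty in the wrong place, and the step that actually carries the theorem is missing.

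Preservation of acyclicity is automatic and is not the crux. Since $K^b(\cA)\to D^b(\cB)$ is an equivalence, an acyclic bounded complex of tilting objects is zero in $K^b(\cA)$, i.e.\ contractible, and applying $F$ termwise preserves contractibility. So $F$ \emph{always} induces a monoidal triangulated functor $\tilde F\colon D^b(\cB)\simeq K^b(\cA)\to D^b(\cT)$, and well-definedness and functoriality of your $H^0\circ\tilde F$ come for free. What does not come for free is exactness. A short exact sequence in $\cB$ becomes a triangle, and its long exact cohomology sequence yields a short exact sequence of $H^0$'s only if the neighbouring cohomologies vanish. A horseshoe argument does not change this: even a termwise split sequence of tilting complexes only gives a long exact sequence in $\cT$, and tilting objects are not projective anyway.

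What must be shown is $t$-exactness: $\tilde F(X)\in\cT[0]$ for all $X\in\cB$. Your class $\cS$ is the right one, but you need $\cS=\cB$, not acyclicity. Knowing that $\cS$ contains all $\Delta(\lambda),\nabla(\lambda)$ and is extension-closed does not give this. General objects of $\cB$ have neither standard nor costandard filtrations, and thick generation of $D^b(\cB)$ by standards forgets the $t$-structure. The missing input is \cite{FG}*{Lemma~2.22(2)}: the $\Delta(\lambda)[k]$ with $k\ge0$ generate $D^b(\cB)^{\le0}$ under extensions, and the $\nabla(\lambda)[-k]$ with $k\ge0$ generate $D^b(\cB)^{\ge0}$. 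Since $D^b(\cT)^{\le0}$ and $D^b(\cT)^{\ge0}$ are extension-closed, standards in $\cS$ give right $t$-exactness of $\tilde F$ and costandards give left $t$-exactness. This is why both families appear in the hypothesis.

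Second, the continuant input is not a result you can quote from \cite{CE}. Also, continuants are built from an object $Y$ and its iterated duals with evaluation differentials, not from a morphism. What is needed is \Cref{prop:concentrated}, namely $F(\EE_n(Y))\simeq H^0F(\EE_n(Y))[0]$, and this is exactly where faithfulness of $F$ is used. By \cite{CE}*{Theorem~4.2.3} applied in $\cT$, it suffices to show $H^0F(\EE_n(Y))\neq0$ for all $n$. For this, one proves by induction on $n$ that $K_n=H^0\EE_n(Y)$ is a quotient of some $T_n\in\cA$. The induction uses the triangle $\EE_{n-1}(Y)^*\o\EE_{n+1}(Y)\to\EE_n(Y)^*\o\EE_n(Y)\to\one\to$ together with exactness of $T\o T\to T\to\one$ from \cite{CEOP}*{Proposition~2.3.2}. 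Then $T_n\tto K_n\hookrightarrow\EE_n(Y)_0$ is a non-zero morphism in $\cA$ killed by the first differential. By faithfulness, its image under $F$ is a non-zero morphism into $H^0F(\EE_n(Y))$. Without this argument, nothing in your proposal justifies placing the continuant cohomologies in $\cS$.
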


The proof of the existence of the abelian envelope has two distinct parts. We first use the description of morphisms by planar diagrams to show that $\cA$ admits a triangular structure in the sense of \cite{SS}. In conjunction with monoidal Ringel duality \cite{FG} we obtain as in Theorem \ref{intro-a} a lower finite highest weight category $\cB$ whose tilting objects agree with $\cA$. The second part of the proof consists then in identifying certain standard objects with continuant cohomologies and verifying all the required properties of Theorem \ref{intro-b}. In the process, we obtain descriptions of all co-/standard objects in $\cB$.

\subsection*{Use of AI} The first version of this proof was found by GPT-6 Astra \cite{astra} on September~5, 2026. We simplified and checked the proof, and wrote it up by hand, before using GPT-6 Astra and Claude Opus \cite{opus} to check our document.

\subsection*{Acknowledgments} J.F.~ was supported by Hausdorff Center for Mathematics Bonn. J.F.'s Claude license was provided by the 2024 Max Planck--Humboldt Research Award (Max Planck Society / Alexander von Humboldt Foundation) bestowed on Geordie William\-son and hosted by Catharina Stroppel. J.F.~and T.H.~thank the ARTIN network as some research for this article was carried out during the ARTIN 2026 meeting in Newcastle.

%%%%%%%%%%%%%%%%%%%%%%%%%%%%%%%%%%%%%%%%%%%%%%%%%%%%%%%%
\subsection*{Conventions}
We keep our conventions consistent with \cite{CEOP} and \cite{FG}.

Tensor categories, pseudo-tensor categories, and abelian envelopes are defined as in \cite{CEOP}: tensor categories are abelian linear monoidal rigid with one-dimensional endomorphism algebra of the tensor unit. Pseudo-tensor categories are not necessarily abelian, but additive and idempotent complete instead. An abelian envelope is a fully faithful embedding $\cA\subset\cB$ of a pseudo-tensor category into a tensor category such that restriction along this embedding induces an equivalence, for any tensor category $\cT$, between the exact linear monoidal functors of the form $\cB\to\cT$ and the faithful linear monoidal functors of the form $\cA\to\cT$.

For an object $X$, its left dual is denoted $X^*$ and the corresponding coevaluation map is of the form $\one\to X\o X^*$. Likewise $^*X$ denotes the right dual.

Complexes are cohomologically indexed as in \cite{FG}, with differentials $C_i\to C_{i+1}$ and shifts $C[1]_i=C_{i+1}$. $K^b(\cA)$ and $D^b(\cB)$ denote the bounded homotopy category of an additive category $\cA$ and the bounded derived category of an abelian category $\cB$, respectively.

We read diagrams from source = top to target = bottom.

Throughout, $\kk$ is a fixed algebraically closed field.\footnote{We expect all results to hold over any field. We cite results from the literature that formulate this assumption, but we believe that their results, as far as relevant for our purposes, do not depend on it.} 

\section{Continuants and abelian envelopes} \label{sec:continuants}

Let $\cA$ be an additive rigid monoidal category. Our main tool in this section will be the \emph{continuants} $\EE_n$ considered in \cite{CE}. These complexes are built entirely from tensor words in an object and its iterated duals, with differentials obtained from evaluation. They can therefore be transported termwise by a monoidal functor defined only on the additive rigid monoidal category $\cA$. We will recall them using some combinatorial definitions that we will reuse later.

\begin{definition} \label{def:admissible}
    For any finite sequence of integers $w=(w_1,\dots,w_r)$, we define a set $I_\cup(w)$ of possible cup positions and a set $I_\cap(w)$ of possible cap positions as
$$
I_\cup(w) := \{ j: 1\le j\le r-1, w_j = w_{j+1}+1 \} ,\quad
I_\cap(w) := \{ j: 1\le j\le r-1, w_{j+1} = w_j+1 \} .
$$
A subset of $I_\cup(w)$ or $I_\cap(w)$ is called \emph{admissible} if it contains no two consecutive integers. Let $I^a_\cup(w)$ and $I^a_\cap(w)$ be the sets of admissible subsets of $I_\cup(w)$ and $I_\cap(w)$, respectively. For any $S$ in $I^a_\cup(w)$ or $I^a_\cap(w)$, let $w\setminus S$ denote the sequence obtained by removing the entries at positions $S$ and their successors in $w$. 
\end{definition}

For any finite sequence $w=(w_1,\dots,w_r)$ of integers and $X\in\cA$, we set
$$
X^{*w} := X^{* w_1}\o\dots\o X^{* w_r} ,
$$
where $X^{*0}:=X$, $X^{*j+1}:=(X^{*j})^*$, and $X^{*-j-1}:={}^*(X^{*-j})$ for all $j\ge0$. Then, by \cite{CE}*{Remark~2.1.4}, the continuant $\EE_n(X)$ is the complex 
$$
(\EE_n(X))_q := \bigoplus_{S\in I^a_\cup((n-1,\dots,1,0)), |S|=q} X^{* (n-1,\dots,1,0)\setminus S}
\quad\tforall q\ge0 .
$$
Set $w:=(n-1,\dots,1,0)$. Then the non-zero components of the differential of the complex are given by the maps 
$$(-1)^{|\{j\in S:j<i\}|} \ev_{S,i}.$$
Here $\ev_{S,i}\:  X^{*w\setminus S}\to X^{*w\setminus(S\cup\{i\})}$ denotes the evaluation map applied at positions $i,i+1$ of $w$ in the tensor product, whenever $i\in I_\cup(w)\setminus S$ and $S\cup\{i\}$ is admissible. This is indeed a complex as evaluations on disjoint pairs of tensor factors commute.

For example, 
$$
\EE_0(X) = \one ,\quad
\EE_1(X) = X, \quad
\EE_2(X) = [X^* \o X \xrightarrow{\ev_X} \one] ,
$$
where the leftmost term in each complex is in degree $0$. 

We use the following results on continuants:

\begin{proposition} \label{prop:EC} Fix $X\in\cA$.
\begin{enumerate}
\renewcommand{\theenumi}{\alph{enumi}}
\item \cite{CE}*{Proposition~2.2.5(1)} For every $n\ge1$, there is an exact triangle in $K^b(\cA)$
$$
\EE_{n-1}(X)^* \o \EE_{n+1}(X) \to \EE_n(X)^*\o \EE_n(X) \xrightarrow{\ev_{\EE_n(X)}} \one \to
$$
\item \cite{CE}*{Theorem~4.2.3} If $\cA$ is a tensor category and $\EE_n(X)\neq0$ in $D^b(\cA)$ for all $n\ge0$, then $\EE_n(X)$ has cohomology concentrated in degree $0$, with $H^0\EE_n(X)\neq0$, for all $n\ge0$.
\end{enumerate}
\end{proposition}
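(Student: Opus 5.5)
Both parts of \Cref{prop:EC} are quoted from \cite{CE}, so in the paper it suffices to cite them. If I were to reprove them, I would argue as follows.

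For (a), I would start from the explicit model of $\EE_n(X)$ in terms of admissible subsets of $I_\cup(w)$ for $w=(n-1,\dots,1,0)$.

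\begin{itemize}
\item \emph{Recursion.} Split the index set of $\EE_{n+1}(X)$, where $w=(n,\dots,0)$, according to whether $1\in S$. The terms with $1\notin S$ form a subcomplex isomorphic to $X^{*n}\o\EE_n(X)$. The terms with $1\in S$ form the quotient complex. Since admissibility forbids $2\in S$, that quotient is $\EE_{n-1}(X)$, placed so that its degree-$0$ term $X^{*(n-2,\dots,0)}$ sits in degree $1$. The connecting map is $\ev$ on the first two factors, tensored with the inclusion of $\EE_{n-1}(X)$. This gives a triangle
\[
X^{*n}\o\EE_n(X)\to\EE_{n+1}(X)\to\EE_{n-1}(X)[-1]\to,
\]
which is the continued-fraction recursion of continuants.
\item \emph{Dual recursion.} Reading off the first factor instead gives a dual description of $\EE_n(X)^*$ via the left-dual functor. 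One has to check carefully that $(X^{*w})^*=X^{*(w')}$ with $w'$ reversed and shifted by one. This shift is why $\EE_n(X)^*\o\EE_n(X)$ again consists of tensor words in consecutive iterated duals.
\item \emph{Induction.} For $n=1$ the triangle is $X^*\o X\xrightarrow{\ev}\one$ with cocone $\EE_0^*\o\EE_2$, which is exactly $\EE_2(X)$. For the inductive step, tensor the recursion triangle with $\EE_{n-1}(X)^*$ and compare with $\ev_{\EE_n(X)}$ via the octahedral axiom, using the induction hypothesis for $n-1$. The evaluation of $\EE_n(X)$ decomposes compatibly with the filtration, because the filtration pieces are tensor words and evaluations on disjoint factors commute.
\end{itemize}

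\textbf{The main obstacle in (a)} is the sign bookkeeping: making the octahedron commute on the nose in $K^b(\cA)$ rather than only up to a non-canonical isomorphism.

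For (b), I would use that $\o$ is biexact in a tensor category. Hence for bounded complexes the extremal cohomology satisfies a Künneth property: if $A$ and $B$ have top cohomology in degrees $a$ and $b$, then $H^{a+b}(A\o B)=H^aA\o H^bB$, which is nonzero when both factors are nonzero, and similarly for bottom degrees. I would then induct on $n$, proving that each $\EE_n(X)$ has cohomology only in degree $0$.

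\begin{itemize}
\item \emph{Top degree.} Suppose $\EE_{n+1}(X)$ has top nonzero cohomology in degree $m>0$. Apply the long exact cohomology sequence to the triangle of (a). The middle term $\EE_n^*\o\EE_n$ is concentrated in degree $0$ by induction, and $\one$ is in degree $0$. So $\EE_{n-1}^*\o\EE_{n+1}$ can only have cohomology in degrees $0$ and $1$, and its degree-$1$ part is $\operatorname{coker}(H^0\ev)$. This cokernel is zero, because $\one$ is simple and $H^0\ev_{\EE_n}\neq0$, the latter since $H^0\EE_n\neq0$ and $\ev$ is nondegenerate. Künneth for the top degree, together with $H^0\EE_{n-1}\neq0$, then forces $m=0$.
\item \emph{Bottom degree.} Negative degrees are handled the same way with bottom cohomology. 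Here the hypothesis $\EE_{n+1}(X)\neq0$ guarantees that some extremal degree exists, and concentration then yields $H^0\EE_{n+1}\neq0$.
\end{itemize}

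\textbf{The main obstacle in (b)} is justifying that extremal-degree Künneth argument in a non-semisimple tensor category. I would handle it through the spectral sequence of the double complex, whose corner terms do not receive or emit differentials.
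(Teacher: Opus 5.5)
Your argument for (b) is essentially the paper's. The paper spells it out because \cite{CE}*{Theorem~4.2.3} is stated for finite length categories, and this argument shows finiteness is not needed. The shared steps are: apply the triangle of (a) with $\EE_n(X)\simeq A[0]$ and $\EE_{n-1}(X)\simeq B[0]$ by induction; note that $\ev_A$ is an epimorphism onto the simple object $\one$; conclude that $\EE_{n-1}(X)^*\o\EE_{n+1}(X)$ is concentrated in degree $0$.

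Your extremal-degree K\"unneth step and the spectral sequence are unnecessary, so the obstacle you flag does not arise. Since $\o$ is biexact, it preserves quasi-isomorphisms, so
\[
H^i\bigl(\EE_{n-1}(X)^*\o\EE_{n+1}(X)\bigr)\cong B^*\o H^i\EE_{n+1}(X)\quad\text{for all } i .
\]
Faithfulness of $B^*\o-$ for $B\neq0$ then gives concentration in every degree at once, with no separate top and bottom analysis. Nonvanishing of $H^0\EE_{n+1}(X)$ comes from the hypothesis $\EE_{n+1}(X)\neq0$, as you say.

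For (a), the paper only cites \cite{CE}, so citing suffices. However, your optional sketch has the recursion backwards. The differential only enlarges $S$. So the summands with $1\in S$ form the \emph{subcomplex}, isomorphic to $\EE_{n-1}(X)[-1]$, and those with $1\notin S$ form the \emph{quotient} $X^{*n}\o\EE_n(X)$. The correct triangle is
\[
\EE_{n+1}(X)\to X^{*n}\o\EE_n(X)\to\EE_{n-1}(X)\to ,
\]
which is the left-end analogue of \cite{CE}*{Proposition~2.2.4(1)}.

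Your version already fails for $n=1$. It would give a triangle $X^*\o X\to\EE_2(X)\to\one[-1]\to$ whose connecting map lies in $\Hom_{K^b}(\one[-1],(X^*\o X)[1])=0$. That forces $\EE_2(X)\cong (X^*\o X)[0]\oplus\one[-1]$. For $X=\kk$ in $\Vect\fd$, the left side is contractible while the right side is not.

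Separately, the inductive octahedral step is only gestured at. The octahedral axiom only provides \emph{some} triangle. Identifying its third term with $\EE_{n-1}(X)^*\o\EE_{n+1}(X)$, and its map with $\ev_{\EE_n(X)}$, requires matching the morphisms, for example via the dualized recursion for $\EE_n(X)^*$. That is the actual content of \cite{CE}*{Proposition~2.2.5(1)}, not just sign bookkeeping. Since (a) is a citation, this does not affect the proof of the statement, but the sketch should not be presented as a proof.
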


\begin{proof} For (b), note that loc.cit.~strictly speaking only applies to finite length categories, but the proof there does not depend on this: assume $\EE_n(X)$ and $\EE_{n-1}(X)$ are quasi-isomorphic to $A[0]$ and $B[0]$, respectively, for some $n\ge1$ and some non-zero objects $A,B$. Then by (a), we have an exact triangle
$$
B^* \o \EE_{n+1}(X) \to A^*\o A[0] \xrightarrow{\ev_A} \one[0] \to .
$$
As $A\neq0$, the map $\ev_A$ is an epimorphism. Hence $B^*\o \EE_{n+1}(X)$ is quasi-isomorphic to the kernel of $\ev_A$ placed in degree $0$. Tensoring with $B^*$ is exact and faithful, which implies the assertion for $\EE_{n+1}(X)$.
\end{proof}

\medskip

The following is the main technical tool to prove the desired abelian envelopes:

\begin{proposition} \label{prop:concentrated}
Let $\cA$ be a full additive rigid monoidal subcategory of a tensor category $\cC$.
Assume $X\in\cA$ and $\EE_n(X)\neq0$ in $D^b(\cC)$ for all $n\ge0$.
Then for any faithful linear monoidal functor $F\:\cA\to\cT$ to a tensor category $\cT$ and for all $n \geq 0$, we have
$$
F(\EE_n(X)) \simeq H^0 F(\EE_n(X))[0]
$$
in $D^b(\cT)$ and $H^0 F(\EE_n(X)) \neq 0$.
\end{proposition}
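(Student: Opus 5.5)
The plan is to reduce the statement to \Cref{prop:EC}(b), applied in $\cT$ to the object $F(X)$. The first step is to check that $F(\EE_n(X))\simeq\EE_n(F(X))$ as complexes over $\cT$. A linear monoidal functor between rigid categories preserves left and right duals together with their evaluation maps, up to the canonical isomorphisms. So applying $F$ termwise to the complex $\EE_n(X)$, with summands $X^{*w\setminus S}$ and differentials given by the signed maps $\ev_{S,i}$, yields exactly the complex $\EE_n(F(X))$. This is the point of defining continuants purely in terms of tensor words and evaluations.

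After that, it suffices to show that $\EE_n(F(X))\neq0$ in $D^b(\cT)$ for all $n\ge0$. Then \Cref{prop:EC}(b) gives that $F(\EE_n(X))$ has cohomology concentrated in degree $0$ with $H^0\neq0$, which is the claim. I would argue by induction on $n$, reproducing the argument in the proof of \Cref{prop:EC}(b) inside $\cT$.

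\emph{Base cases.} $\EE_0=\one$, and $\EE_1(F(X))=F(X)$, which is nonzero because $X=\EE_1(X)\neq0$ and $F$ is faithful.

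\emph{Inductive step.} Assume $F(\EE_n(X))\simeq A[0]$ and $F(\EE_{n-1}(X))\simeq B[0]$ with $A,B\neq0$. Applying $F$ to the triangle of \Cref{prop:EC}(a) shows that $B^*\o F(\EE_{n+1}(X))$ is quasi-isomorphic to $\ker(\ev_A)[0]$. Since $B^*\o-$ is exact and faithful, $F(\EE_{n+1}(X))$ is concentrated in degree $0$, and it is nonzero if and only if $\ker(\ev_A)\neq0$. The latter fails exactly when $A^*\o A\cong\one$, that is, when $A$ is invertible.

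The main obstacle is excluding this invertible case using only that $F$ is faithful on $\cA$, since $F$ is not defined on $\cC$ and need not be faithful on $K^b(\cA)$. My first attempt would use categorical dimensions, which are scalars preserved by $F$. Taking the Euler characteristic of the triangle in \Cref{prop:EC}(a) gives the recursion
$$d_{n-1}d_{n+1}=d_n^2-1,\qquad d_n:=\dim\EE_n(X),$$
computed identically in $\cC$ and in $\cT$. One would then try to derive a contradiction with $d_{n+1}=0$, or with $d_n=\pm1$, using the nonvanishing of all $H^0\EE_m(X)$ in $\cC$. This may be insufficient, for instance in positive characteristic. The fallback is to note that $H^0F(\EE_{n+1}(X))=\ker F(d^0)$, where $d^0$ is the first differential, a morphism in $\cA$. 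One would then try to exhibit, using rigidity and the nonzero kernel of $d^0$ in $\cC$, a nonzero morphism $g$ in $\cA$ with $d^0\circ g=0$. Faithfulness would then keep $F(g)\neq0$ while $F(d^0)\circ F(g)=0$, so $\ker F(d^0)\neq0$.
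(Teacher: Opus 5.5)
Your reduction is sound and matches the paper's first step: $F$ carries $\EE_n(X)$ termwise to $\EE_n(F(X))$, and by \Cref{prop:EC}(b) everything follows once $H^0F(\EE_n(X))\neq0$ for all $n$. Your induction also correctly isolates what is needed, namely that $A=H^0F(\EE_n(X))$ is not invertible. But neither proposed route proves this, so there is a genuine gap at the only nontrivial step.

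\emph{The dimension route fails.} Tensor categories here are not assumed pivotal, so categorical dimensions need not be defined. Even when they are, nonzero objects can have dimension $0$. So $d_{n+1}=0$ is compatible with $H^0\EE_{n+1}(X)\neq0$ in $\cC$, and the recursion yields no contradiction.

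\emph{The fallback is incomplete.} Finding a nonzero $g$ in $\cA$ with $d^0g=0$ is the right idea, and it is exactly how the paper concludes. But you give no way to produce $g$, and rigidity alone does not supply one: a nonzero subobject in $\cC$ of an object of $\cA$ need not receive any nonzero morphism from $\cA$. For example, take the upper triangular Borel $B\subset SL_2$ in characteristic $0$, and let $\cA\subset\mathrm{Rep}(B)$ be the full subcategory of restrictions of $SL_2$-modules. The $B$-stable line $\kk e_1\subset\kk^2$ has weight $1$, whereas the head of every object of $\cA$ consists of characters of weight $\le0$. So one must exploit the specific structure of the continuant kernels.

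The missing idea is to show that each $K_n:=H^0\EE_n(X)=\ker d_n$ is a \emph{quotient} in $\cC$ of some $T_n\in\cA$. Then $g_n\colon T_n\tto K_n\hookrightarrow\EE_n(X)_0$ is a nonzero morphism of $\cA$ (by fullness) with $d_ng_n=0$. By faithfulness, $F(g_n)\neq0$ factors through $\ker F(d_n)=H^0F(\EE_n(X))$, which is therefore nonzero. This works for every $n$ at once, so your induction in $\cT$ becomes unnecessary.

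To get $T_n$, the paper uses the class $\cQ$ of quotients of objects of $\cA$. It is closed under quotients, tensor products and double duals. Crucially, it is also closed under kernels of nonzero morphisms $Y\to\one$: for $p\colon T\tto Y$ and $u\colon Y\tto\one$, the sequence $T\o T\xrightarrow{up\o\id-\id\o up}T\xrightarrow{up}\one$ is exact by \cite{CEOP}*{Proposition~2.3.2}, and $p$ maps $\ker(up)$ onto $\ker u$.

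The induction then runs as follows. $K_n^*$ is a quotient of $\EE_n(X)_0^*$, so $\ker(\ev_{K_n})\in\cQ$. By \Cref{prop:EC}(a), applied in $\cC$, this kernel is $K_{n-1}^*\o K_{n+1}$. The epimorphism $\ev_{K_{n-1}^*}\o\id\colon K_{n-1}^{**}\o K_{n-1}^*\o K_{n+1}\tto K_{n+1}$ then gives $K_{n+1}\in\cQ$. This is where the hypothesis $\EE_n(X)\neq0$ in $D^b(\cC)$ actually enters: it makes $\ev_{K_n}$ and $\ev_{K_{n-1}^*}$ nonzero. It does not enter through dimension counts in $\cT$.
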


\begin{proof} By \Cref{prop:EC}(b), it suffices to show that $H^0 F(\EE_n(X))\neq0$ for all $n\ge0$. 

For any $n\ge0$, set $K_n:=H^0\EE_n(X)$ and let $d_n\:\EE_n(X)_0\to\EE_n(X)_1$ be the first differential of the continuant. Thus $K_n\cong\ker d_n$, and by \Cref{prop:EC}(b), we have $\EE_n(X)\simeq K_n[0]$ in $D^b(\cC)$ and $K_n\neq0$.

We claim that it suffices to show that for any $n\ge0$, there is an object $T_n\in\cA$ and an epimorphism $T_n\tto K_n$.
Indeed, given such an epimorphism, set $g_n:= T_n\tto K_n\hookrightarrow \EE_n(X)_0$. Then $d_n g_n=0$ and the map $T_n\to K_n$ induced by $g_n$ is an epimorphism. As $K_n\neq0$, the morphism $g_n$ is non-zero and belongs to $\cA$ by fullness. Hence, for faithful $F$, $F(g_n)$ induces a non-zero morphism to $H^0 F(\EE_n(X))$, showing it is non-zero, as required.

To construct $T_n$ as required, let $\cQ$ be the class of quotient objects of objects from $\cA$ in $\cC$. $\cQ$ is closed under quotients, tensor products, and double duals. We claim that it is also closed under forming kernels of non-zero morphisms to $\one$: assume there are epimorphisms $p\:T\tto Y$ and $u\:Y\tto\one$, where $T\in\cA$ and $Y\in\cC$. Then 
$$
T\o T \xrightarrow{up\o\id-\id\o up} T\xrightarrow{up} \one
$$
is exact at $T$ \cite{CEOP}*{Proposition~2.3.2}, and $p$ induces an epimorphism $\ker(up)\tto\ker(u)$. Thus $\ker(u)$ is a quotient of $T\o T$, and lies in $\cQ$.

We have to show that $\cQ$ contains $K_n$ for all $n\ge0$. This is clear for $n=0,1$ by definition. Assume it holds for $n-1$ and $n$, for some $n\ge1$. Dualizing the inclusion $K_n\subset \EE_n(X)_0\in\cA$ shows $K_n^*\in\cQ$. So by the above closure properties, the kernel of $\ev_{K_n}$ lies in $\cQ$. By \Cref{prop:EC}(a), this is $K_{n-1}^*\o K_{n+1}$. As $K_{n-1}\neq0$, we obtain an epimorphism
$$
K_{n-1}^{**} \o K_{n-1}^* \o K_{n+1} \xrightarrow{\ev_{K_{n-1}^*}\o\id} K_{n+1} ,
$$
showing $K_{n+1}\in\cQ$, since $K_{n-1}^{**}$ and $K_{n-1}^*\o K_{n+1}$ are already in $\cQ$. This completes the induction.
\end{proof}

Using the preceding proposition, we can give a sufficient criterion for when a tensor category with a lower finite highest weight structure is the abelian envelope of its subcategory of tilting objects, assuming that the latter is a pseudo-tensor subcategory.
Recall that a lower finite highest weight category is (loosely speaking) a finite length linear abelian category $\cC$ with finite-dimensional $\Hom$-spaces and with classes of \emph{standard objects} $\Delta(\lambda)$ and \emph{costandard objects} $\nabla(\lambda)$, indexed by a lower finite poset $(\Lambda,\leq)$ and satisfying certain homological properties; we refer the reader to \cite{BS} for additional details.
An object of $\cC$ is called a \emph{tilting object} if it admits a standard filtration and a costandard filtration, and we write $\Tilt\cC$ for the full subcategory of tilting objects in $\cC$.
A key property of tilting objects is that the canonical functor
\begin{equation}
\label{eq:tilting-equiv}
    K^b( \Tilt\cC ) \longrightarrow D^b( \cC )
\end{equation}
from the bounded homotopy category of $\Tilt\cC$ to the bounded derived category of $\cC$ is an equivalence of triangulated categories.

\begin{theorem} \label{thm:main-general} Let $\cB$ be a lower finite highest weight category that is also a tensor category. Assume its full subcategory $\cA$ of tilting objects is a pseudo-tensor subcategory. Assume there is a family $(Y_i)_{i\in I}$ of objects in $\cA$ such that $\EE_n(Y_i)\neq0$ in $D^b(\cB)$ for all $n\ge0$ and all $i\in I$. Assume all standard objects and all costandard objects in $\cB$ are contained in the full subcategory $\cB_0$ of $\cB$ generated by $\cA$ and all $K_{i,n}:=H^0\EE_n(Y_i)\in\cB$ under isomorphisms, tensor products, finite direct sums, direct summands, left and right duals, and extensions. Then the inclusion $\cA\subset\cB$ is an abelian envelope.
\end{theorem}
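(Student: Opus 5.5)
Given a faithful linear monoidal functor $F\:\cA\to\cT$ into a tensor category, we must produce an exact linear monoidal extension $\tilde F\:\cB\to\cT$, unique up to unique isomorphism. We must also check that every exact monoidal functor out of $\cB$ restricts to a faithful functor on $\cA$. The latter is immediate: exact functors between tensor categories are faithful, since a nonzero object $Y$ has $\ev_Y$ epi, so its image is nonzero. The plan is to extend $F$ through derived categories via \eqref{eq:tilting-equiv}. Apply $F$ termwise to get a triangulated monoidal functor
$$K^b(F)\: D^b(\cB)\simeq K^b(\cA)\longrightarrow K^b(\cT)\longrightarrow D^b(\cT).$$
We then show that it sends the heart $\cB$ into the heart $\cT$. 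Once this holds, $\tilde F:=H^0\circ K^b(F)|_{\cB}$ is exact, since short exact sequences are triangles and a triangle of heart objects gives a short exact sequence. It is also monoidal, because tensor products in tensor categories are exact, so $\o$ on $D^b$ restricts to the hearts. By construction $\tilde F$ extends $F$.

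The core step is to show that the full subcategory $\cB'\subset\cB$ of objects $M$ with $K^b(F)(M)$ concentrated in degree $0$ contains $\cB_0$. It contains $\cA$ trivially. It contains $K_{i,n}$ because $K_{i,n}[0]\simeq\EE_n(Y_i)$ in $D^b(\cB)$ by \Cref{prop:EC}(b), and \Cref{prop:concentrated} shows $F(\EE_n(Y_i))$ is concentrated in degree $0$. We then verify the closure properties.
\begin{itemize}
\item Isomorphisms, finite sums and summands are clear, since cohomology of a summand is a summand.
\item Duals hold because $K^b(F)$ commutes with termwise duals, and duality in $\cT$ is exact.
\item Tensor products hold because $K^b(F)$ is monoidal and the derived tensor product of two degree-$0$ objects in $\cT$ is again in degree $0$, by exactness of $\o$.
\item Extensions hold by the long exact cohomology sequence of the image triangle.
\end{itemize}
So all standard and costandard objects lie in $\cB'$.

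Next we extend from co/standard objects to all of $\cB$. The main obstacle is that the heart is not generated by co/standards under extensions; simple objects are only quotients. I would use a t-structure argument instead. Since $\Hom_{D^b(\cB)}(\Delta(\lambda),\nabla(\mu)[k])=0$ for $k\neq0$, the objects $\Delta(\lambda)$ generate $D^{\le0}$ and the $\nabla(\mu)$ generate $D^{\ge0}$ under extensions and positive, respectively negative, shifts. The pair of subcategories generated this way is the standard t-structure. More concretely, every $M\in\cB$ is in the truncation range: $M$ has a finite resolution in $D^b$ by finite complexes built from standards in degrees $\le0$, namely a "standard resolution" coming from tilting complexes, and dually one by costandards in degrees $\ge0$. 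Since $K^b(F)$ sends each $\Delta(\lambda)$ and $\nabla(\mu)$ into the heart, it sends $D^{\le0}(\cB)$ into $D^{\le0}(\cT)$ and $D^{\ge0}(\cB)$ into $D^{\ge0}(\cT)$. Hence it is t-exact and preserves hearts. The delicate point is justifying that $D^{\le0}(\cB)$ is the extension-closure of $\{\Delta(\lambda)[k]:k\ge0\}$ in the lower finite, possibly non-finite, setting. I would reduce to finite ideals (Serre subcategories $\cB_{\le\Gamma}$ for finite lower sets $\Gamma$), which are finite highest weight categories where this is standard, since each object lives in one.

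Uniqueness is the last step. Any exact monoidal extension $G$ of $F$ agrees with $\tilde F$ on $\cA$. For $M\in\cB$, present $M$ as $H^0$ of a tilting complex, and use $G(H^0(T^\bullet))=H^0(G(T^\bullet))$, which holds by exactness. This gives $G\cong\tilde F$ naturally and monoidally, and the isomorphism is determined by its values on $\cA$ via the same presentations. This gives the equivalence of functor categories required in the definition of abelian envelope.
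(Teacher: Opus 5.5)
Your proposal is correct and follows essentially the paper's route. Both arguments extend $F$ termwise through $K^b(\cA)\simeq D^b(\cB)$ and show that the objects sent to $\cT[0]$ form a subcategory containing $\cB_0$. They then deduce $t$-exactness from the fact that suitably shifted standard (resp.\ costandard) objects generate $D^b(\cB)^{\le0}$ (resp.\ $D^b(\cB)^{\ge0}$) under extensions, and finish with the same uniqueness argument via tilting complexes. The only differences are that the paper simply cites this generation fact as \cite{FG}*{Lemma~2.22(2)}, where you reduce to finite ideals, and that the paper explicitly notes monoidal natural transformations are invertible by rigidity to get full faithfulness of restriction, which you leave implicit.

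One slip: your parenthetical that the resolutions by standard-filtered objects in degrees $\le0$ come ``from tilting complexes'' is wrong as stated. Tilting complexes representing an object generally have terms in positive degrees; for example $\EE_n(X_s)$ represents $\Delta(w^-_{s,n})$ with terms in degrees $0,1,\dots$. The generation statement should instead come from finite projective (resp.\ injective) resolutions in the finite highest weight truncations $\cB_\Gamma$, which is the reduction you go on to propose.
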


\begin{proof}
Let $F\:\cA\to\cT$ be a faithful linear monoidal functor to a tensor category.
Using the equivalence \eqref{eq:tilting-equiv} and applying $F$ termwise yields a monoidal triangulated functor
$$
\tilde F: D^b(\cB) \simeq K^b(\cA) \longrightarrow D^b(\cT) .
$$
By \Cref{prop:concentrated}, we have
$$
\tilde F(K_{i,n})\simeq F(\EE_n(Y_i))\simeq H^0 F(\EE_n(Y_i))[0] \in\cT[0].
$$
Hence, the full subcategory of objects $Y\in\cB$ such that $\tilde F(Y)\in\cT[0]$ contains all tilting objects and all $K_{i,n}$. It is closed under isomorphisms, tensor products, finite direct sums and summands, left and right duals, and extensions. Thus it contains all standard and costandard objects by our assumptions. By \cite{FG}*{Lemma~2.22(2)}, the standard objects with non-negative shifts generate $D^b(\cB)^{\le0}$ under extensions, and the costandard objects with non-positive shifts generate $D^b(\cB)^{\ge0}$. Hence, $\tilde F$ is $t$-exact. So $\tilde F$ restricts to an exact linear monoidal functor $\overline{F}\:\cB\to\cT$ such that $F\cong\overline{F} \circ (\cA\hookrightarrow\cB)$.

Now assume $G_1,G_2\:\cB\to\cT$ are exact linear monoidal functors and $\alpha\:G_1|_{\cA}\xrightarrow{\cong} G_2|_{\cA}$ is a monoidal natural isomorphism. Applying $\alpha$ termwise, using the equivalence \eqref{eq:tilting-equiv} again, and restricting to the hearts produces a natural monoidal isomorphism $\beta\:G_1\xrightarrow{\cong} G_2$ that restricts to $\alpha$ on $\cA$. For any $M\in\cB$, choose a bounded tilting complex representing $M$. Any extension of $\alpha$ between the exact functors $G_1,G_2$ induces on this complex the termwise map given by $\alpha$. Naturality with respect to its isomorphism with $M$ in $D^b(\cB)$ determines $\beta_M$ uniquely. Monoidal natural transformations are invertible by rigidity, and exact linear monoidal functors between tensor categories are faithful. This proves full faithfulness of restriction.
\end{proof}

\section{The universal pseudo-tensor category}
The main players of this section are the concrete categories $\cP$ and $\cA$ defined as follows:

Let $\cP$ be the free rigid monoidal $\kk$-linear category generated by an object $X_0$, as described in \cite{CEOP}*{Example~1.1.2} or \cite{CSV}. It contains objects $(X_i)_{i\in\ZZ}$ such that $X_{i+1}$ is the left dual and $X_{i-1}$ is the right dual of $X_i$, for all $i$. Let $\Lambda$ be the set of finite, possibly empty integer sequences $w=(w_1,\dots,w_r)$, then the objects of $\cP$ are given by the tensor products 
$$
X_w:=X_0^{* w} = X_{w_1}\o\dots\o X_{w_r}
\quad\tforall w\in\Lambda .
$$
Its morphisms are generated by co-/evaluation morphisms. In other words, let $P_{w,w'}$ be the set of planar / non-crossing arc diagrams whose upper and lower points are labeled by the sequences $w$ and $w'$ from $\Lambda$, respectively, subject to the condition that every (upper) cup has labels $(i+1,i)$, every (lower) cap has labels $(i,i+1)$, and every through-strand has labels $(i,i)$, for some $i$. Then 
$$
\cP(X_w,X_{w'}) = \kk P_{w,w'} .
$$
Identities in $\cP$ are given by diagrams without cups or caps.  Composition is given by the usual stacking of diagrams; note that no loops can arise in this way. Finally, tensor products are formed by juxtaposing diagrams and concatenating integer sequences. In particular, the tensor unit is given by the empty integer sequence.

Let $\cA$ be the additive and idempotent completion of $\cP$. It is a pseudo-tensor category over $\kk$ (in the sense of \cite{CEOP}), and can be regarded as the free pseudo-tensor category generated by an object $X_0$.

We define an additional structure on the category $\cP$. 

\begin{definition}
    Let $\cU$ and $\cD$ be the wide $\kk$-linear subcategories of $\cP$ whose morphisms are linear combinations of those diagrams from $\bigsqcup_{w,w'\in\Lambda} P_{w,w'}$ that have no cups or no caps, respectively. 
\end{definition}

In other words, $\cU$ is generated under composition and tensor product by diagrams whose components have at most one upper point.

For integer sequences $w,w'$, we denote by $w+w'$ their concatenation.

\begin{definition} Let $\le$ be the minimal reflexive and transitive relation on $\Lambda$ defined by setting
$$
w+w' \le w+(i,i+1)+w' ,
\quad
w+w' 
\le w+(i,i-1)+w'
\quad 
\tforall 
w,w'\in\Lambda, i\in\ZZ  .
$$
\end{definition}

Two sequences $w,w'$ are comparable in this order if one can be turned into the other by a sequence of removals of adjacent pairs of consecutive integers. 

Note that $w\le w'$ implies that the length of $w$ is at most the length of $w'$, and the length can be equal only if $w=w'$; in particular, $\le$ is antisymmetric and hence a partial order.
Distinct words give non-isomorphic objects, as the number of through-strands cannot increase under composition, and an all-through diagram between words of equal length exists only if the words agree. Thus $\Lambda$ labels the isomorphism classes of objects in $\cP$.

\begin{lemma} \label{lem-1} The data $(\cU,\cD,\le)$ define a monoidal triangular structure on $\cP$ in the sense of \cite{SS}*{Definition~4.1, Section~4.11}, and both left and right duality in $\cP$ define an order-preserving monoidal duality in the sense of \cite{FG}*{Definition~4.7}.
\end{lemma}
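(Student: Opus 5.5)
We verify the axioms of a triangular category from \cite{SS}*{Definition~4.1} one at a time, and then the monoidality and duality conditions. The axioms are: $\cU$ and $\cD$ are wide subcategories whose endomorphism algebras are trivial in the appropriate sense; multiplication $\cD\otimes\cU\to\cP$ is a bijection onto a basis; the order is compatible with $\cU$ and $\cD$; and a lower finiteness condition holds. In the diagrammatic description all of these reduce to combinatorics of planar arc diagrams.

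\emph{Step 1 (factorization).} We show that every diagram $p\in P_{w,w'}$ factors uniquely as $p=d\circ u$. Here $u\in\cU$ consists of the upper caps of $p$ together with through-strands, and $d\in\cD$ consists of the lower cups of $p$ together with through-strands. Concretely, let $v$ be the subsequence of $w$ formed by the upper endpoints of the through-strands of $p$. Then $u\in P_{w,v}$ keeps the upper arcs of $p$ and straightens the through-strands, and $d\in P_{v,w'}$ keeps the lower arcs. Planarity makes both diagrams non-crossing, and the label conditions transfer directly. For uniqueness, note that the middle object $v$ is forced to be the labels of the through-strands, and the arcs on each side are forced. Since compositions of cup-free and cap-free diagrams produce no loops, the composition map $\bigoplus_v \cD(X_v,X_{w'})\otimes \cU(X_w,X_v)\to\cP(X_w,X_{w'})$ sends basis to basis bijectively. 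This gives the triangular decomposition.

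\emph{Step 2 (order compatibility and finiteness).} A non-identity diagram in $\cU(X_w,X_v)$ removes adjacent pairs of labels of the form $(i,i+1)$ (caps) or $(i+1,i)$ after nested removals. So $v<w$ by the defining relations of $\le$, where nested arcs are handled by iterating removals from the inside out. Dually, $\cD(X_v,X_{w'})\neq0$ forces $v\le w'$. Since a diagram with no arcs between words of equal length is an identity, we get $\cU(X_w,X_w)=\cD(X_w,X_w)=\kk$. Lower finiteness follows because $\{v: v\le w\}$ consists of subsequences of $w$ and is therefore finite.

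\emph{Step 3 (monoidal and duality conditions).} For the monoidal condition of \cite{SS}*{Section~4.11}, the subcategories $\cU$ and $\cD$ are closed under juxtaposition, the factorization in Step 1 is compatible with tensor products since juxtaposition of diagrams commutes with the construction, and $\le$ is compatible with concatenation by definition. For the duality condition of \cite{FG}*{Definition~4.7}, left duality sends $X_w$ to $X_{w^\vee}$ with $w^\vee=(w_r+1,\dots,w_1+1)$. On diagrams it acts by rotation by $\pi$, which exchanges cups and caps and therefore swaps $\cU$ and $\cD$, as required. Reversing and shifting a sequence maps the generating relations $w+w'\le w+(i,i\pm1)+w'$ to relations of the same form, so duality is order-preserving. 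Right duality is handled symmetrically using the shift by $-1$.

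\emph{Main obstacle.} The main obstacle is matching precisely the formal definitions in \cite{SS} and \cite{FG}, which are stated for objects indexed by a poset with specified identifications $\cU(X_w,X_w)=\kk$ and possibly a splitting of $\cP$ into $\cD\otimes_{\kk\Lambda}\cU$. The combinatorics is elementary, but I expect most of the care to go into Step 1, namely the uniqueness of the middle object and the basis statement.
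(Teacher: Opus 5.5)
Your strategy matches the paper's: factor each planar diagram uniquely through the word of its through-strand labels, get lower finiteness from $\{v : v\le w\}$ consisting of subsequences of $w$, use closure under juxtaposition, and treat duality as reverse-and-shift of words. However, you have interchanged $\cU$ and $\cD$, and with the paper's definitions Steps~1 and~2 state false claims. In the paper, the upper (source) arcs are the \emph{cups}, labelled $(i+1,i)$, and the lower (target) arcs are the \emph{caps}, labelled $(i,i+1)$. So $\cU$, which has no cups, consists of through-strands and lower caps. It is the upward part: $\cU(X_w,X_{w'})\neq0$ forces $w\le w'$. Meanwhile $\cD$ carries the upper cups, and $\cD(X_w,X_{w'})\neq0$ forces $w'\le w$. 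Your Step~2 asserts the reverse inequalities. Your Step~1 puts the upper arcs of $p$ into $u\in\cU$ and the lower arcs into $d\in\cD$, which is impossible.

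This is not only cosmetic, because it breaks the factorization axiom as you state it. The map $\bigoplus_v\cD(X_v,X_{w'})\o\cU(X_w,X_v)\to\cP(X_w,X_{w'})$, meaning a $\cU$-diagram followed by a $\cD$-diagram, is not injective. Caps of the first factor meet cups of the second, and the resulting zigzags straighten. Concretely, let $c\colon\one\to X_i\o X_{i+1}$ be the coevaluation. The pairs $(\id_{X_i},\id_{X_i})$ through $v=(i)$ and $(\id_{X_i}\o\ev_{X_i},\ c\o\id_{X_i})$ through $v=(i,i+1,i)$ are distinct basis elements of the source. By the snake identity, both map to $\id_{X_i}$.

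What a triangular structure needs, with $\cU$ upward as in \cite{SS}*{Definition~4.1} and as in the paper's proof, is that $\bigoplus_v\cU(X_v,X_{w'})\o\cD(X_w,X_v)\to\cP(X_w,X_{w'})$ is an isomorphism. That is, $p=u\circ d$, where $d\in\cD$ carries the upper cups and is applied first, and $u\in\cU$ carries the lower caps. In this order no arcs meet in the middle word, so your basis-to-basis argument is valid. If you swap $\cU\leftrightarrow\cD$ in Steps~1--2 and reverse the order of composition, your proof coincides with the paper's. Step~3 is fine as written.
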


\begin{proof} Endomorphism diagrams in $\cU$ or $\cD$ have to be identities, so $\cU(X,X)\cong\cD(X,X)\cong\kk$ for all $X\in\cP$. $\cU(X_w,X_{w'})$ or $\cD(X_w,X_{w'})$ can only be non-zero if $w\le w'$ or $w'\le w$, respectively, and the poset $(\Lambda,\le)$ is lower finite by the arguments above. Finally, the linear maps
$$
\bigoplus_{Y\in\cP}
\cU(Y,Z) \o_{\cU(Y,Y)} \cD(X,Y) \to \cP(X,Z)
$$
constructed using composition in $\cP$ are isomorphisms for all $X,Z\in\cP$, as $\cU(Y,Y)=\kk \id_Y$ for any $Y\in\cP$ and any basis diagram in $\cP(X,Z)$ is uniquely a composition of a pair of diagrams without cups or caps, respectively. This verifies the axioms of a triangular structure.

$\cU$ and $\cD$ are closed under tensor products, verifying the axioms of a monoidal triangular structure.

The left dual of an object $X_w$ for $w=(w_1,\dots,w_r)$ is given by $X_{w^*}$ with
\begin{equation}
w^*=(w_r+1,w_{r-1}+1,\dots,w_1+1) ,
\label{eq:dual-words}    
\end{equation}
and similarly for right duals. This implies the axioms of an order-preserving monoidal duality.
\end{proof}

For a small $\kk$-linear category $\cC$, we denote its category of presheaves $\Fun_\kk(\cC\op,\Vect_\kk)$ valued in (possibly infinite-dimensional) vector spaces by $\PSh(\cC)$. $\PSh(\cC)$ is naturally abelian, and if $\cC$ is monoidal, then Day convolution defines a bifunctor on $\PSh(\cC)$ that is generally right exact in each argument.

\begin{lemma} \label{lem-2} The bifunctor on $\PSh(\cU)$ defined by Day convolution is exact in each argument.
\end{lemma}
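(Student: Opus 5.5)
Our plan is to reduce exactness of Day convolution on $\PSh(\cU)$ to a flatness statement about the bimodule defining it. For presheaves $M,N$ on $\cU$, Day convolution is the coend
$$
(M\ast N)(Z)=\int^{X,Y} \cU(Z,X\o Y)\o M(X)\o N(Y).
$$
Equivalently, $M\ast N=\cU(-,\o)\o_{\cU\o\cU}(M\boxtimes N)$. Since $M\mapsto M\boxtimes N$ is exact over a field, it suffices to show that for each $Z$ the right $\cU\o\cU$-module $(X,Y)\mapsto\cU(Z,X\o Y)$ is flat. In fact we aim to show more: it is a direct sum of representable functors, which are projective.

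\textbf{Combinatorial analysis.} A diagram in $\cU(X_z,X_{w})$ has no cups, so each component has at most one upper point. Each component is therefore a through-strand, or a cap whose two points both lie on the bottom. Now fix a basis diagram $d\:X_z\to X_{u+v}$, where the target is split as $X_u\o X_v$. Planarity and the absence of cups imply that the through-strands landing in $u$ come from an initial segment $z'$ of $z$, and those landing in $v$ come from the complementary final segment $z''$. The caps fall into three kinds: those within $u$, those within $v$, and those connecting $u$ to $v$. We then factor $d$ uniquely as follows. First apply $\id_{X_{z'}}\o c\o\id_{X_{z''}}$, where $c$ is the nested family of crossing caps, which produces some word $a+b$ with $a\le$-placed in the left block. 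Then apply a tensor product $d_1\o d_2$ with $d_1\in\cU(X_{z'+a},X_u)$ and $d_2\in\cU(X_{b+z''},X_v)$, each having no crossing caps.

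Concretely, the crossing caps form a nested rainbow of the shape $\bigcup_k$ (caps $(i,i+1)$ spanning the split point). Their left endpoints in $u$ form a word $a$ and their right endpoints in $v$ form $b$, where $b$ is determined by $a$: reversing $a$ and adding $1$ gives $b=a^*$ in the sense of \eqref{eq:dual-words}. This gives a bijection between basis diagrams of $\cU(Z,X\o Y)$ and triples consisting of a splitting $z=z'+z''$, a word $a$, and a pair of diagrams $(d_1,d_2)$. Hence
$$
\cU(Z,-\o-)\cong\bigoplus_{z=z'+z'',\ a\in\Lambda}\cU(X_{z'+a},-)\boxtimes\cU(X_{a^*+z''},-),
$$
naturally in both variables, since postcomposition with $\cU$-morphisms acts on $(d_1,d_2)$ and preserves the decomposition. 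Postcomposing cannot create new caps across the split when the outputs are tensor factors, because the morphisms applied are of the form $f\o g$.

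There are two things to check about this formula. First, although infinitely many $a$ occur, for fixed $(X,Y)$ only finitely many summands are nonzero, so the sum makes sense. Second, the correspondence is natural, so the right-hand side is a direct sum of representables of $\cU\o\cU$ and is therefore projective and flat. Consequently $M\mapsto M\ast N$ is exact, and by symmetry so is $N\mapsto M\ast N$.

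\textbf{Main obstacle.} We expect the main obstacle to be the uniqueness of the factorization, together with its compatibility with the coend. Uniqueness amounts to showing that in a cup-free planar diagram the cross-split caps are exactly one nested rainbow and never interleave with through-strands. Planarity forces this: a through-strand between two crossing caps would have to cross one of them. The remaining work is the routine verification that the coend collapses to the direct sum via Yoneda.
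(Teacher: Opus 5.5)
Your proposal is correct and takes essentially the same approach as the paper. Your data $(z=z'+z'',a)$ with $\id_{X_{z'}}\o c\o\id_{X_{z''}}$ parametrizes exactly the paper's reduced diagrams (cup-free diagrams all of whose caps join the two tensor factors), and your unique factorization $d=(d_1\o d_2)(\id\o c\o\id)$ is the paper's $f=(\alpha\o\beta)f_0$, yielding the same decomposition into representables; you just make the rainbow shape explicit and phrase the conclusion as flatness rather than the equivalent pointwise formula $(M\o N)(X_u)\cong\bigoplus M(X_a)\o_\kk N(X_b)$ (one minor slip: $(X,Y)\mapsto\cU(Z,X\o Y)$ is covariant, so it is a left rather than a right module in the usual convention).
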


\begin{proof} For $u,v,w\in\Lambda$, we define a set of \emph{reduced} diagrams\footnote{The idea of using reduced diagrams in this sense is very similar in spirit to \cite{SS}*{e.g., Proof of Proposition~5.11} or the more abstract version in \cite{FG}*{Appendix A}.} $R_{u,v,w}$ as those diagrams in $P_{u,v+w}$ that lie in $\cU$ and in which every cap connects a point in $v$ with a point in $w$. We claim that for any $f\in P_{u,v+w}$ lying in $\cU$, there exist unique $a,b\in\Lambda$, $f_0\in R_{u,a,b}$, $\alpha\in P_{a,v}$, $\beta\in P_{b,w}$ such that $\alpha,\beta\in\cU$ and
$$
f = (\alpha\o\beta) f_0
\quad\text{in }\cU .
$$
Indeed, $\alpha$ and $\beta$ are uniquely defined by those caps in $f$ that connect points within $v$ or $w$, respectively, and $f_0$ is uniquely defined by the remaining caps in $f$.

The claim implies that
$$
\cU(X_u, X_v\o X_w) \cong
\bigoplus_{a,b\in\Lambda,f_0\in R_{u,a,b}} \cU(X_a,X_v)\o_\kk \cU(X_b,X_w) ,
$$
which implies that for any $M,N\in\PSh(\cU)$ and any $X_u\in\cP$,
$$
(M\o N)(X_u) \cong \bigoplus_{a,b\in\Lambda,f_0\in R_{u,a,b}} M(X_a)\o_\kk N(X_b) . 
$$
Indeed, this can be read as an isomorphism of functors, and fixing $M$ or $N$ yields a formula that implies exactness of the resulting functor.
\end{proof}

For a small $\kk$-linear category $\cC$, let $\PSh\fd(\cC):=\Fun_\kk(\cC\op,\Vect\fd)$ be the category of pointwise finite-dimensional presheaves. Set $\cR:=\PSh\fd(\cP\op)$.

Recall that $\cA$ is the additive and idempotent completion of $\cP$.

\begin{theorem} \label{thm-1} The monoidal Ringel dual $\cB:={}^\vee(\cR\op)$ in the sense of \cite{FG} is a lower finite highest weight category with weight set $\Lambda$ whose (full) subcategory of tilting objects is monoidally equivalent with $\cA$. $\cB$ is a tensor category in the sense of \cite{CEOP}, in particular, rigid monoidal with biexact tensor product, and $\cA$ viewed as a subcategory is closed under taking duals.
\end{theorem}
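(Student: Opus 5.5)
The plan is to derive every assertion from the general monoidal Ringel duality machinery of \cite{FG}, applied to the monoidal triangular structure of \Cref{lem-1}.

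\textbf{Step 1: the highest weight structure.} By \Cref{lem-1}, $(\cU,\cD,\le)$ is a monoidal triangular structure on $\cP$ with lower finite poset $(\Lambda,\le)$. The results of \cite{SS} then show that the category of finite-dimensional modules, which here is $\cR=\PSh\fd(\cP\op)$, is an upper finite highest weight category with weight poset $\Lambda$. Its standard objects are induced from $\cD$ and its costandard objects are coinduced from $\cU$. Accordingly, $\cR\op$ is highest weight of the dual type. The Ringel duality of \cite{FG} (see also \cite{BS}) then produces a lower finite highest weight category $\cB={}^\vee(\cR\op)$ with the same weight set $\Lambda$. This duality exchanges tilting objects with projective or injective objects on the other side. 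Concretely, $\Tilt\cB$ is equivalent to the opposite of the category of projectives or injectives in $\cR\op$. Since the indecomposable projective-type objects are the representables $\cP(X_w,-)$, that category is the Karoubi envelope of $\cP$, namely $\cA$. I will track the two $\op$'s so that the equivalence is covariant, $\Tilt\cB\simeq\cA$.

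\textbf{Step 2: monoidality.} Here I would invoke the monoidal version of \cite{FG}. Its hypotheses are:
\begin{enumerate}
\item a monoidal triangular structure, given by \Cref{lem-1};
\item exactness of the convolution product on the relevant presheaf category of the upper or lower part, given by \Cref{lem-2} for $\cU$ (and by symmetry of diagrams for $\cD$);
\item an order-preserving monoidal duality, given by \Cref{lem-1}.
\end{enumerate}
With these, \cite{FG} supplies a monoidal structure on $\cB$ that is biexact. It also makes $\Tilt\cB$ a monoidal subcategory, with $\Tilt\cB\simeq\cA$ monoidal. Rigidity should follow from the order-preserving duality: the duals $X_{w^*}$ from \eqref{eq:dual-words} send standard objects to costandard objects and tilting objects to tilting objects. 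Hence $\cA$ is closed under duals, and the rigidity of $\cA$ extends to $\cB$. To see the extension, note that every object of $\cB$ is, via \eqref{eq:tilting-equiv}, represented by a tilting complex. The duality on $K^b(\cA)$ is $t$-exact because it swaps standard and costandard objects with the appropriate shifts, as in \cite{FG}*{Lemma~2.22}. It therefore restricts to the heart.

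\textbf{Step 3: the remaining tensor category axioms.} I still need $\operatorname{End}(\one)=\kk$ and finite length with finite-dimensional Hom-spaces. The tensor unit is the tilting object $X_{()}$, whose endomorphism algebra is $\cP(X_{()},X_{()})=\kk$, because the only diagram with no points is the empty one. Finite length and finite-dimensional Homs are part of the definition of a lower finite highest weight category.

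The main obstacle is Step 2. One must check that the hypotheses of the monoidal Ringel duality theorem of \cite{FG} are exactly those established in \Cref{lem-1} and \Cref{lem-2}, including the correct side (left or right duals, $\cU$ versus $\cD$) and the op-conventions in ${}^\vee(\cR\op)$. Most delicate is confirming that rigidity and biexactness of $\cB$ follow, rather than merely the existence of a right exact monoidal structure. My first attempt is to use the criterion of \cite{FG}: if the costandard objects (coinduced from $\cU$) are closed under the exact Day convolution, which holds by \Cref{lem-2}, then tensor products of tilting objects are tilting. Combined with \eqref{eq:tilting-equiv}, this gives a $t$-exact derived tensor product.
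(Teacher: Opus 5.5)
Your proposal follows the paper's route. The paper likewise reads off every assertion from \cite{FG}*{Theorem~4.9}, with \Cref{lem-1} and \Cref{lem-2} supplying the hypotheses. It gets the $\cD$-side via $\cD\op\simeq\cU^{\mathrm{rev}}$ coming from the duality in $\cP$, which is the precise form of your ``symmetry of diagrams''. It then adds $\operatorname{End}(\one)=\kk$, and confirms that the weight set is all of $\Lambda$ by noting that the simple modules over $\cM=\cU\cap\cD$ are one-dimensional and supported at single words.

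Your hand-made fallback for biexactness is not needed, and as stated it would not work. Closure of tilting objects under $\o$ is automatic here and does not by itself give $t$-exactness of the tensor product on $K^b(\cA)\simeq D^b(\cB)$. What is actually required, via \cite{FG}*{Lemma~2.22(2)}, is that both $\Delta$-filtered and $\nabla$-filtered objects are closed under $\o$, so simply citing \cite{FG}*{Theorem~4.9}, as the paper does, is the cleaner argument.
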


\begin{proof} First note that $\cD\op\simeq\cU^{\text{rev}}$ by the duality in $\cP$. Now the assertions follow directly from \cite{FG}*{Theorem~4.9}, where \Cref{lem-1} and \Cref{lem-2} verify the relevant assumptions. To see that $\cB$ is a tensor category, we additionally observe that the tensor unit has a one-dimensional endomorphism algebra in $\cP$.

To identify the entire weight set, let $\cM:=\cU\cap\cD$. Then $\cM(X_v,X_w)=0$ for $v\ne w$ and $\cM(X_w,X_w)=\kk$. Thus the simple functors on $\cM$ are exactly the one-dimensional functors supported at a single sequence $w$. By \cite{FG}*{Proposition~4.3, Proof of Theorem~4.9}, these label all simple objects of $\cR$ and $\cR\op$, and Ringel duality retains the same label set. Hence $\Lambda$ is the entire weight set of $\cB$.
\end{proof}

\medskip

By \Cref{thm-1} and the equivalence in \eqref{eq:tilting-equiv}, the canonical functor 
\[
K^b(\cA) \longrightarrow D^b(\cB)
\]
from the bounded homotopy category of the additive category $\cA$ to the bounded derived category of the abelian category $\cB$
is an equivalence of triangulated categories.
Let $T(w)$ denote the image of $X_w$ in $\cB$; this is a tilting object which need not be indecomposable.

\medskip

For any $w\in\Lambda$, we denote the co-/standard objects in $\cB$ by $\nabla(w)$ and $\Delta(w)$, respectively. Our next goal is to determine co-/presentations and, eventually, tilting co-/resolutions for these 
objects.

Recall \Cref{def:admissible}.

For $j\in I_\cup(w)$, let $\ev_j\:T(w)\to T(w\setminus\{j\})$ be the diagram with exactly one cup at position $j$; similarly, for any $j\in I_\cap(w)$, let $\co_j\: T(w\setminus\{j\})\to T(w)$ be the diagram with one cap at position $j$. We can think of these as morphisms in $\cB$.

\begin{lemma} \label{lem:presentations} The co-/standard objects have the following co-/presentations by tilting objects in $\cB$:
$$
\nabla(w) \cong T(w) \big/  \sum_{j\in I_\cap(w)} \im \co_j ,\quad
\Delta(w) = \bigcap_{j\in I_\cup(w)} \ker \ev_j \subset T(w) .
$$
\end{lemma}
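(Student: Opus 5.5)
We will exploit the general structure of monoidal Ringel duality in \cite{FG}: in $\cB$, the tilting object $T(w)$ is the image of the representable $X_w$, and the standard and costandard objects are characterized inside it by the triangular structure $(\cU,\cD,\le)$. Concretely, we plan to use the following description from \cite{FG} (cf.\ the proof of \cite{FG}*{Theorem~4.9} and the construction in \cite{SS}). The costandard object $\nabla(w)$ is the largest quotient of $T(w)$ whose composition factors other than the top $L(w)$ all have weights $<w$ and which receives no nonzero maps from lower tilting objects through $\cD$-type morphisms. Dually, $\Delta(w)$ is the largest subobject of $T(w)$ killed by all $\cU$-type morphisms to $T(v)$ with $v<w$. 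Equivalently, $\nabla(w)$ is the cokernel of the sum of the images of all maps $T(v)\to T(w)$ with $v<w$ that factor through $\cD$. Likewise, $\Delta(w)$ is the intersection of the kernels of all maps $T(w)\to T(v)$ with $v<w$ that factor through $\cU$.

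The first step is therefore to pin down this characterization precisely. We will read it off from \cite{FG}: there, $\Delta$ in the Ringel dual corresponds under the Ringel equivalence to the standard modules over $\cR$, which are induced from $\cM$-simples along $\cD$ or $\cU$. Then $T(w)\to\nabla(w)$ is the universal map killing everything coming from strictly lower weights via the ``raising'' part.

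The second step is combinatorial. We must show that the sum over all relevant $\cD$-type maps $T(v)\to T(w)$ with $v<w$ reduces to the single caps $\co_j$, $j\in I_\cap(w)$. Every non-identity diagram $T(v)\to T(w)$ in $\cD$ (no cups) has at least one cap. Planarity forces an innermost cap, which joins adjacent lower points $j,j+1$ with labels $(i,i+1)$, so $j\in I_\cap(w)$. Hence the diagram factors as $\co_j\circ g$ with $g\in\cD$, and its image lies in $\im\co_j$. The dual argument for $\cU$ shows that every cup-containing diagram $T(w)\to T(v)$ in $\cU$ factors as $h\circ\ev_j$ for some innermost cup $j\in I_\cup(w)$. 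This gives the kernel description of $\Delta(w)$. Alternatively, $\Delta(w)$ can be obtained from $\nabla$ by the order-preserving duality of \Cref{lem-1}: $w\mapsto w^*$ from \eqref{eq:dual-words} swaps $I_\cup$ and $I_\cap$ up to reversal, and duality exchanges $\Delta$ and $\nabla$.

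The main obstacle is the first step: making sure the abstract definition of $\nabla(w)$ in the monoidal Ringel dual really is this cokernel, rather than merely a quotient of it. In particular, the quotient $T(w)/\sum_j\im\co_j$ must have simple top $L(w)$ and satisfy the required $\operatorname{Ext}$-vanishing. We intend to handle this through Hom spaces. Via the equivalence \eqref{eq:tilting-equiv}, for each $v$ we compute $\Hom_\cB(\Delta(v),-)$ on the proposed cokernel using the triangular decomposition $\cP(X,Z)\cong\bigoplus_Y\cU(Y,Z)\o\cD(X,Y)$. We then check that it is one-dimensional for $v=w$ and zero otherwise, which characterizes $\nabla(w)$ in a highest weight category.
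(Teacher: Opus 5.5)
Two parts of your proposal match the paper. The reduction of $\Delta$ to $\nabla$ via the order-preserving duality is the same. So is your combinatorial step: an innermost cap sits at some $j\in I_\cap(w)$, so every non-identity cup-free diagram into $X_w$ factors through $\co_j$. You have swapped the labels, though. In the paper $\cU$ consists of the diagrams \emph{without cups}, so $\co_j\in\cU$, and $\cD$ of those without caps. Read literally, your ``$\cD$-type maps $T(v)\to T(w)$ with $v<w$'' are all zero.

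\textbf{The gap is your first step.} You cannot read off from \cite{FG} that $\nabla(w)$ is the cokernel of the lower cup-free maps. Combined with your combinatorics, that statement \emph{is} the lemma. Your proposed Hom-space verification does not close it, for three reasons.
\begin{enumerate}
\item $\nabla(w)$ has simple \emph{socle} $L(w)$, not simple top. For example, $\nabla((1,0))=T((1,0))=X_0^*\o X_0$ is a non-split extension of $\one$ by the simple object $\ker\ev_{X_0}$, so its top is $\one$.
\item The condition $\dim\Hom_\cB(\Delta(v),M)=\delta_{v,w}$ does not characterize $\nabla(w)$. Every nonzero subobject of $\nabla(w)$, in particular $L(w)$, satisfies it too. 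You would also need $\operatorname{Ext}^1(\Delta(v),M)=0$, or a surjection $M\tto\nabla(w)$.
\item You have no means of computing $\Hom_\cB(\Delta(v),-)$ on a cokernel. $\Hom$ is only left exact. Going through $K^b(\cA)\simeq D^b(\cB)$ would require tilting complexes for $\Delta(v)$ and for $M$, which are not available here. The tilting coresolutions of the $\Delta(w^-_{s,n})$ are obtained later, from this very lemma.
\end{enumerate}

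\textbf{The missing idea is to prove the presentation on the Ringel-dual side, where it is trivial.}
\begin{enumerate}
\item Under the opposite Ringel duality of \cite{FG}, $T(v)$ corresponds to the representable $\cP(-,X_v)$ in $\cR\op\simeq\PSh\fd(\cP)$. The object $\nabla(w)$ corresponds to the standard object $\Delta_{\cR\op}(w)=\iota_!(s_w)$, where $\iota\colon\cU\to\cP$ and $s_w$ is the simple presheaf at $X_w$.
\item The functor realizing this correspondence on cocompletions is right exact. So it carries a projective presentation of $\Delta_{\cR\op}(w)$ by representables to a presentation of $\nabla(w)$.
\item Since $\iota_!$ is exact and sends $\cU(-,X_v)$ to $\cP(-,X_v)$, it suffices to present $s_w$ in $\PSh\fd(\cU)$.
\item That means showing the non-identity cup-free diagrams into $X_w$ span the same space as the diagrams factoring through some $\co_j$. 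This is exactly your innermost-cap argument, plus the remark that $\id_{X_w}$ is not in that span.
\end{enumerate}

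If you prefer to stay inside $\cB$, there is another route. You could use the general highest weight fact that, since $(T(w):\nabla(w))=1$, $\ker(T(w)\tto\nabla(w))$ is the trace in $T(w)$ of tilting objects with weights $<w$. This is proved via tilting approximations of $\nabla$-filtered objects in the truncation to weights $<w$. The triangular decomposition then shows this trace equals $\sum_j\im\co_j$. That argument has to be supplied, however, and it is not the Hom-space check you proposed.
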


\begin{proof}
Using the bijection between standard and costandard objects in $\cB$ induced by duality in $\cP$ (see \cite{FG}*{Theorem~2.25}), each statement implies the other.

By Ringel duality (\Cref{thm-1}), showing the asserted presentation of $\nabla(w)$, 
\begin{equation}
\bigoplus_{j\in I_\cap(w)} T(w\setminus\{j\}) 
\xrightarrow{(\co_j)_j} 
T(w) 
\tto \nabla(w)
\label{eq:pres}
\end{equation}
follows by showing that replacing tilting objects by representable functors in $\cR\op\simeq\PSh\fd(\cP)$ gives a projective presentation of the standard object $\Delta_{\cR\op}(w)$ in $\cR\op$. Here the equivalence with presheaves is given by pointwise vector-space duality. More precisely, the opposite Ringel duality functor on cocompletions is right exact, sends the representable $\cP(-,X_v)$ to $T(v)$, and sends $\Delta_{\cR\op}(w)$ to $\nabla(w)$; see \cite{FG}*{Theorem~2.25 and its proof}. Applying this functor to the projective presentation therefore gives \Cref{eq:pres}.

Consider the inclusion $\iota\:\cU\to\cP$. As explained in \cite{SS}*{Section~3.6, Propositions~4.4 and 4.5}, the induced functor $\PSh\fd(\cP)\to\PSh\fd(\cU)$ has a left adjoint $\iota_!\:\PSh\fd(\cU)\to\PSh\fd(\cP)$ that is exact and sends projectives to projectives and simple objects to standard objects.

Hence, it suffices to show that replacing tilting objects in \Cref{eq:pres} by representable functors in $\PSh\fd(\cU)$ gives a presentation of $s_w$, the simple object given by the one-dimensional presheaf supported at $X_w$.

Fix $v\in\Lambda$. Any diagram in $P_{v,w}$ that lies in $\cU$ but is not the identity on $w$ has caps, at least one of which must be connecting adjacent lower points. Hence, it factors via a morphism of the form $\co_j$. Conversely, the images of the maps $\co_j$ contain no non-zero multiple of $\id_{X_w}$. This implies the asserted presentation of $s_w$. 
\end{proof}

The above co-/presentations imply tensor product decompositions and some general consequences on the highest weight category $\cB$. For $s\in\ZZ$ and $n\ge0$, we define ascending and descending sequences 
$$
w^+_{s,n}:=(s,s+1,\dots,s+n-1) ,\quad
w^-_{s,n}:=(s+n-1,\dots,s+1,s)
\quad\in\Lambda .
$$
In particular, the sequences are empty if $n=0$. Note that 
$$
(w^\pm_{s,n})^*=w^\mp_{s+1,n} ,\quad (w^\pm_{s,n})^{**}=w^\pm_{s+2,n} , $$
(see \Cref{eq:dual-words}).

Any sequence $w\in\Lambda$ has a unique decomposition into a minimal number of ascending sequences, which we call the \emph{ascending blocks} of $w$. Similarly, the \emph{descending blocks} of $w$ are defined as the minimal decomposition into descending sequences.

\begin{proposition} \label{prop:decomp}
For any $w\in\Lambda$ with a decomposition into ascending/descending blocks
$$
w = w^+_{s_1,n_1} + \dots + w^+_{s_p,n_p}
= w^-_{t_1,m_1} + \dots + w^-_{t_q,m_q} ,
$$
there are isomorphisms
$$
\Delta(w) \cong \Delta(w^-_{t_1,m_1}) \o\dots\o \Delta(w^-_{t_q,m_q}) , \quad
\nabla(w) \cong \nabla(w^+_{s_1,n_1}) \o\dots\o \nabla(w^+_{s_p,n_p}) .
$$
Moreover, for all $s\in\ZZ$ and $n\ge0$, the standard objects $\Delta(w^-_{s,n})$ are simple, and we have
$$
\nabla(w^-_{s,n}) \cong T(w^-_{s,n}) ,\quad \Delta(w^+_{s,n}) \cong T(w^+_{s,n}) ,
$$
$$
\nabla(w^\pm_{s,n}) \cong \Delta(w^\mp_{s-1,n})^* .
$$
\end{proposition}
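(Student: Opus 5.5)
The plan is to derive everything from the co-/presentations of \Cref{lem:presentations}, using that tensor products in $\cB$ are biexact (\Cref{thm-1}) and that $T(v+v')\cong T(v)\o T(v')$. I would prove the statements about $\Delta$ first; the statements about $\nabla$ then follow by the duality argument in the last paragraph.

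\emph{Tensor decomposition of $\Delta(w)$.} Write $w=w^-_{t_1,m_1}+\dots+w^-_{t_q,m_q}$. By maximality of the descending blocks, at a block boundary $j$ we have $w_j\neq w_{j+1}+1$. Hence $I_\cup(w)$ is the disjoint union of the cup positions inside the individual blocks. Each $\ev_j$ is then of the form $\id\o\dots\o\ev_{j'}\o\dots\o\id$ on $T(w)\cong\bigotimes_k T(w^-_{t_k,m_k})$. So it suffices to show, for morphisms $f\:A\to A'$ and $g\:B\to B'$, that
$$
\ker(f\o\id_B)\cap\ker(\id_A\o g)=\ker f\o\ker g ,
$$
and likewise for finite families of maps on each factor. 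By exactness of $-\o B$, we get $\ker(f\o\id_B)=\ker f\o B$. The map $\id\o g$ restricted to $\ker f\o B$ is $\id_{\ker f}\o g$, and by exactness of $\ker f\o-$ its kernel is $\ker f\o\ker g$. Induction over the factors (and over the several $\ev_j$ within a block, whose kernels are already intersected inside that block) gives $\Delta(w)\cong\bigotimes_k\Delta(w^-_{t_k,m_k})$.

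\emph{Special cases.} For $w^+_{s,n}$ no position satisfies $w_j=w_{j+1}+1$, so $I_\cup=\emptyset$ and $\Delta(w^+_{s,n})=T(w^+_{s,n})$. Dually, for $w^-_{s,n}$ we have $I_\cap=\emptyset$, so $\nabla(w^-_{s,n})\cong T(w^-_{s,n})$.

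\emph{Simplicity of $\Delta(w^-_{s,n})$.} By \Cref{lem:presentations}, $\Delta(w^-_{s,n})$ is a subobject of $T(w^-_{s,n})\cong\nabla(w^-_{s,n})$, so the inclusion is a nonzero morphism $\Delta(\lambda)\to\nabla(\lambda)$ with $\lambda=w^-_{s,n}$. In a highest weight category, $\Hom(\Delta(\lambda),\nabla(\lambda))$ is one-dimensional and the image of any nonzero such map is $L(\lambda)$, since $\Delta(\lambda)$ has simple head $L(\lambda)$ and $\nabla(\lambda)$ has simple socle $L(\lambda)$. Our map is injective, so $\Delta(\lambda)\cong L(\lambda)$ is simple.

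\emph{Duality and the statements for $\nabla$.} By \eqref{eq:dual-words}, $T(w^-_{s-1,n})^*\cong T(w^+_{s,n})$. Under left duality each cup map $\ev_j$ of $T(w^-_{s-1,n})$ becomes a cap map $\co_{j'}$ into $T(w^+_{s,n})$, with $j\mapsto j'$ a bijection $I_\cup(w^-_{s-1,n})\to I_\cap(w^+_{s,n})$. Duality is exact and contravariant, so it turns $\bigcap_j\ker\ev_j$ into $T(w^+_{s,n})/\sum_{j'}\im\co_{j'}$. This gives $\Delta(w^-_{s-1,n})^*\cong\nabla(w^+_{s,n})$, and the case $\nabla(w^-_{s,n})\cong\Delta(w^+_{s-1,n})^*$ is analogous. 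Finally, the decomposition of $\nabla(w)$ into ascending blocks follows from the $\Delta$-decomposition of the dual word, applying $(-)^*$ and using that duality reverses tensor order and interchanges ascending and descending blocks. Alternatively, one can repeat the first step verbatim with sums of images, using right exactness of $\o$.

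The main obstacle I expect is keeping track of signs and indices in this duality step: checking that dualizing the $\ev_j$ really yields exactly the family $\{\co_{j'}\}_{j'\in I_\cap}$, up to nonzero scalars, which do not affect images. Compared with that, the block-boundary observation and the kernel-intersection argument are routine.
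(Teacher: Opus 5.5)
Your proposal is correct and follows essentially the same route as the paper. You apply the kernel and quotient tensor identities to the presentations of \Cref{lem:presentations}, use the block-boundary observation and the empty cup/cap sets for monotone words, identify the image of $\Delta(\lambda)\hookrightarrow T(\lambda)\cong\nabla(\lambda)$ with the simple object, and use duality for the rest. The only cosmetic difference is that you obtain the $\nabla$-decomposition by dualizing the $\Delta$-decomposition instead of applying the quotient identity directly, and you note the latter as an alternative yourself.
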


\begin{proof} In any tensor category, we have the identities
$$
(\ker f)\o(\ker g) = \ker (f\o\id) \cap \ker (\id\o g) ,\quad
(X / X')\o (Y / Y') =
\frac{X\o Y}{X'\o Y + X\o Y'} ,
$$
as subobject and quotient, respectively, for all morphisms $f,g$ and all subobjects $X'\subset X$ and $Y'\subset Y$.

This applied to the co-/presentations from \Cref{lem:presentations} together with the observation that for all non-empty $w=(w_1,\dots,w_r),w'=(w'_1,\dots,w'_{r'})\in\Lambda$ such that $w_r\neq w'_1+1$ or $w_r\neq w'_1-1$, respectively, the sets $I_\cup(w+w')$ and $I_\cap(w+w')$ are a disjoint union of the respective sets for $w$ and $w'$, up to relabeling,
 implies the asserted tensor product decompositions.

$\nabla(w^-_{s,n})$ and $\Delta(w^+_{s,n})$ are isomorphic to the corresponding tilting object as $I_\cap(w^-_{s,n})$ and $I_\cup(w^+_{s,n})$ are empty sets.
This implies that the canonical map $$
\Delta(w^-_{s,n})\hookrightarrow T(w^-_{s,n})\xrightarrow{\cong} \nabla(w^-_{s,n})
$$ is a monomorphism. As its image is the simple object corresponding to $w^-_{s,n}$, $\Delta(w^-_{s,n})$ is simple.
The remaining isomorphisms $\nabla(w^\pm_{s,n})\cong \Delta(w^\mp_{s-1,n})^*$ are again by duality.
\end{proof}

We now connect these concrete categories with the general theory from \Cref{sec:continuants}.

\begin{corollary} \label{cor:tilt-res} For any $s\in\ZZ$ and $n\ge0$, the complex $\EE_n(X_s)$ defines a tilting coresolution of $\Delta(w^-_{s,n})$, that is, there is an isomorphism
$$
    \EE_n(X_s) \cong \Delta(w^-_{s,n})[0]
$$
in $D^b(\cB)$.
\end{corollary}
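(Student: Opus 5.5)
The plan is to argue by induction on $n$, using the recursive structure of continuants. The cases $n=0,1$ are immediate: $\EE_0(X_s)=\one=T(\emptyset)=\Delta(\emptyset)$, and $\EE_1(X_s)=X_s=T(w^+_{s,1})=\Delta(w^-_{s,1})$ by \Cref{prop:decomp}. Throughout, write $w:=w^-_{s,n}$. With $X=X_s$ we have $X^{*(n-1,\dots,0)}=X_{s+n-1}\o\dots\o X_s=T(w)$. Since $w$ is descending, $I_\cup(w)=\{1,\dots,n-1\}$.

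For the recursion, split the admissible sets $S$ according to whether $1\in S$.
\begin{itemize}
\item If $1\notin S$, the summand is $X_{s+n-1}\o(\text{a term of }\EE_{n-1}(X_s))$, with the same differential up to the evaluations at position $1$.
\item If $1\in S$, then $2\notin S$. The summand is a term of $\EE_{n-2}(X_s)$ placed one degree higher, with the sign $(-1)^{|\{j\in S:j<i\}|}$ accounting for the shift.
\end{itemize}
This should exhibit $\EE_n(X_s)$ as a shifted mapping cone,
$$
\EE_n(X_s)\cong \operatorname{cone}\bigl(X_{s+n-1}\o\EE_{n-1}(X_s)\xrightarrow{\ \phi\ }\EE_{n-2}(X_s)\bigr)[-1],
$$
where $\phi$ is the termwise evaluation $\ev_1$ on the first two tensor factors. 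I expect verifying this, including the signs, to be routine bookkeeping. It is the standard continuant recursion of \cite{CE}.

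By induction, $\EE_{n-1}(X_s)\simeq\Delta(w^-_{s,n-1})[0]$ and $\EE_{n-2}(X_s)\simeq\Delta(w^-_{s,n-2})[0]$. The tensor product in $\cB$ is exact (\Cref{thm-1}), so $X_{s+n-1}\o\EE_{n-1}(X_s)\simeq (X_{s+n-1}\o\Delta(w^-_{s,n-1}))[0]$. The long exact sequence of the cone then shows that $\EE_n(X_s)$ has cohomology only in degrees $0$ and $1$, with
$$
H^1=\operatorname{coker}\bigl(\bar\phi\colon X_{s+n-1}\o\Delta(w^-_{s,n-1})\to\Delta(w^-_{s,n-2})\bigr),
$$
where $\bar\phi$ is the restriction of $\ev_1\o\id$ to the subobject $\Delta(w^-_{s,n-1})\subset T(w^-_{s,n-1})$.

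The main point is to show $H^1=0$. By \Cref{prop:decomp}, $\Delta(w^-_{s,n-2})$ is simple, so it suffices that $\bar\phi\neq0$. For this I would pass to the adjoint morphism
$$
\Delta(w^-_{s,n-1})\to X_{s+n-2}\o\Delta(w^-_{s,n-2}).
$$
Under the zig-zag identity it is the inclusion $\Delta(w^-_{s,n-1})\hookrightarrow T(w^-_{s,n-1})=X_{s+n-2}\o T(w^-_{s,n-2})$. This inclusion lands in $X_{s+n-2}\o\Delta(w^-_{s,n-2})$, because by \Cref{lem:presentations} and the kernel identity from the proof of \Cref{prop:decomp}, the latter is exactly the intersection of the kernels of $\ev_j$ for $j\ge2$. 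The inclusion of the nonzero object $\Delta(w^-_{s,n-1})$ is nonzero, hence $\bar\phi\neq0$ and $H^1=0$.

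Finally, $H^0\EE_n(X_s)$ is the kernel of the first differential $T(w)\to\bigoplus_{j\in I_\cup(w)}T(w\setminus\{j\})$, whose components are the $\pm\ev_j$. By \Cref{lem:presentations} this kernel is $\Delta(w)$. Hence $\EE_n(X_s)\simeq\Delta(w^-_{s,n})[0]$ in $D^b(\cB)$, and transporting along \eqref{eq:tilting-equiv} shows that the complex of tiltings $\EE_n(X_s)$ is a tilting coresolution.
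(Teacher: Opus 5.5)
Your argument is correct, but it takes a different route from the paper. The paper's proof is two lines. By \Cref{lem:presentations}, $H^0\EE_n(X_s)\cong\Delta(w^-_{s,n})\neq0$ for every $n$, so $\EE_n(X_s)\neq0$ in $D^b(\cB)$ for all $n\ge0$. The Coulembier--Etingof theorem \Cref{prop:EC}(b) then forces the cohomology to be concentrated in degree $0$. You instead induct on $n$ using the first-factor recursion $\EE_n(X_s)\to X_{s+n-1}\o\EE_{n-1}(X_s)\xrightarrow{\phi}\EE_{n-2}(X_s)\to$, which is the mirror image of \cite{CE}*{Proposition~2.2.4}; that proposition splits off the last factor and replaces $X$ by $X^*$ and $X^{**}$. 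One small imprecision: $\phi$ is zero on summands where the first factor of $\EE_{n-1}(X_s)$ has already been removed, which is harmless since only degree $0$ matters. You then kill $H^1$ by showing, via the zig-zag adjunction, that $H^0(\phi)$ is a nonzero map onto the simple object $\Delta(w^-_{s,n-2})$. Structurally the two arguments are close, because the paper's own justification of \Cref{prop:EC}(b) is also an induction along a continuant triangle in which $H^1$ vanishes because an evaluation map is an epimorphism. The difference is the choice of triangle. The paper uses the triangle of \Cref{prop:EC}(a), whose third term is $\one$, so $\ev_A$ is automatically epi for any nonzero $A$ in any tensor category; nothing beyond nonvanishing needs to be known about the intermediate cohomology objects. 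Your triangle has $\EE_{n-2}(X_s)$ as third term, so you need the category-specific fact that $\Delta(w^-_{s,n-2})$ is simple (\Cref{prop:decomp}, resting on the highest weight structure). In exchange, your proof avoids the external theorem and shows explicitly why each coresolution is exact. The paper's route is shorter, and it is exactly the mechanism reused in \Cref{prop:concentrated}, where the target $\cT$ is arbitrary and no simplicity statement is available.
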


\begin{proof}
    The cohomology of $\EE_n(X_s)$ in degree $0$ is $\Delta(w^-_{s,n})\neq0$ by \Cref{lem:presentations}. Hence, \Cref{prop:EC}(b) implies that, in fact, the cohomology of $\EE_n(X_s)$ is concentrated in degree $0$. This shows the assertion.
\end{proof}

\begin{corollary} \label{cor:ab-env} The embedding $\cA\subset\cB$ is an abelian envelope.
\end{corollary}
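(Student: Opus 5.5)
We apply \Cref{thm:main-general} to the tensor category $\cB$ of \Cref{thm-1}. Its tilting objects form the pseudo-tensor subcategory $\cA$, which is closed under duals. For the family we take $Y_s:=X_s$ for $s\in\ZZ$, that is, $T(w^+_{s,1})$.

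First we check the non-vanishing hypothesis. By \Cref{cor:tilt-res}, $\EE_n(X_s)\cong\Delta(w^-_{s,n})[0]$ in $D^b(\cB)$. By \Cref{lem:presentations} and \Cref{prop:decomp}, $\Delta(w^-_{s,n})$ is the simple object labeled $w^-_{s,n}$, and in particular it is non-zero. Hence $\EE_n(X_s)\neq0$ for all $n\ge0$, and $K_{s,n}=H^0\EE_n(X_s)\cong\Delta(w^-_{s,n})$. So $\cB_0$ contains every $\Delta(w^-_{s,n})$.

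Next we show that all standard and costandard objects lie in $\cB_0$.
\begin{itemize}
\item For an arbitrary $w\in\Lambda$, decompose $w$ into descending blocks. By \Cref{prop:decomp}, $\Delta(w)$ is a tensor product of objects $\Delta(w^-_{t,m})$, so $\Delta(w)\in\cB_0$ by closure under tensor products.
\item For costandard objects, \Cref{prop:decomp} decomposes $\nabla(w)$ as a tensor product of objects $\nabla(w^+_{s,n})$, and gives $\nabla(w^+_{s,n})\cong\Delta(w^-_{s-1,n})^*$. Each factor is the left dual of an object already in $\cB_0$, so $\nabla(w)\in\cB_0$ by closure under duals and tensor products.
\end{itemize}
No extension closure is needed. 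The empty word is harmless: it gives $\Delta=\nabla=\one\in\cA$.

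All hypotheses of \Cref{thm:main-general} are then satisfied, so $\cA\subset\cB$ is an abelian envelope.

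The only delicate point is identifying $H^0\EE_n(X_s)$ with $\Delta(w^-_{s,n})$ and knowing it is non-zero. This is exactly what \Cref{cor:tilt-res} and the simplicity statement in \Cref{prop:decomp} provide. We should also check that the index conventions of the continuant match: $\EE_n(X_s)$ has degree-$0$ term $X_s^{*(n-1,\dots,0)}=T(w^-_{s,n})$, and its kernel of evaluations is the $\Delta$ from \Cref{lem:presentations}. Beyond this, the argument is bookkeeping.
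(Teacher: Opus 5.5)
Your proposal is correct and follows the paper's proof. The paper applies \Cref{thm:main-general} with \Cref{cor:tilt-res} giving non-vanishing and $K_{s,n}\cong\Delta(w^-_{s,n})$ for the family $(X_s)_{s\in\ZZ}$. It then uses \Cref{prop:decomp} to write every standard and costandard object as a tensor product of these objects and their duals; you simply spell out the bookkeeping the paper leaves implicit.
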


\begin{proof} This now follows from \Cref{thm:main-general}, where \Cref{prop:decomp} and \Cref{cor:tilt-res} verify the assumptions.
\end{proof}

\Cref{cor:ab-env} and the arguments leading to its proof imply:

\begin{corollary} \label{rem:mapping-prop} For any tensor category $\cT$, the following categories are equivalent:
\begin{enumerate}
\renewcommand{\theenumi}{\alph{enumi}}
\item the category of tensor functors (i.e., exact linear monoidal functors) of the form $\cB\to\cT$,
\item the category of faithful linear monoidal functors of the form $\cA\to\cT$, and 
\item the groupoid of \emph{unbounded} objects in $\cT$ in the sense of \cite{CE}, i.e., objects $X$ such that $H^0\EE_n(X)\neq0$ for all $n\ge0$.
\end{enumerate}
\end{corollary}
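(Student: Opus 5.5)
The equivalence (a)$\simeq$(b) is \Cref{cor:ab-env}, which says restriction along $\cA\subset\cB$ is an equivalence by the definition of abelian envelope. I will prove (b)$\simeq$(c) by composing (b) with the universal property of $\cA$. By construction, $\cP$ is the free rigid monoidal $\kk$-linear category on $X_0$, and $\cA$ is its additive idempotent completion, whose targets $\cT$ are already additive and idempotent complete. Hence linear monoidal functors $\cA\to\cT$ correspond, up to monoidal equivalence, to objects $X\in\cT$ via $F\mapsto F(X_0)$. Monoidal natural transformations correspond to isomorphisms of the chosen objects, since monoidal transformations between functors out of a rigid category are invertible and are determined by their component at the generator. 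This explains why (c) is a groupoid. It then suffices to show that $F\:\cA\to\cT$ is faithful if and only if $X:=F(X_0)$ is unbounded.

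For the forward direction, let $F$ be faithful. \Cref{cor:tilt-res} gives $\EE_n(X_0)\cong\Delta(w^-_{0,n})[0]\neq 0$ in $D^b(\cB)$ for all $n$. \Cref{prop:concentrated}, applied to the embedding $\cA\subset\cB$ and the object $X_0$, then yields $H^0F(\EE_n(X_0))\neq0$. Since $F$ is monoidal, $F(\EE_n(X_0))=\EE_n(X)$ termwise, so $X$ is unbounded.

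For the converse, let $X$ be unbounded and let $F$ be the induced functor; I must show $F$ is faithful. My plan is to avoid diagram computations and use the equivalence $K^b(\cA)\simeq D^b(\cB)$, exactly as in the proof of \Cref{thm:main-general}. That argument constructs the $t$-exact extension $\tilde F$, and faithfulness of $F$ was used there only through \Cref{prop:concentrated}, to get $F(\EE_n(X_0))\simeq H^0F(\EE_n(X_0))[0]$ with nonzero $H^0$. For unbounded $X$ this conclusion follows instead from \Cref{prop:EC}(b) applied in $\cT$ to $X$ and its iterated duals, noting $F(X_s)=X^{*s}$. I also need $X^{*s}$ to be unbounded. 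I expect this to hold because $\EE_n(X^{*})\cong\EE_n(X)^{*}$ up to reindexing, via the dual words $(w^-_{s,n})^*=w^+_{s+1,n}$, together with the duality $\nabla\cong\Delta^*$ in \Cref{prop:decomp}; checking this is the most delicate point.

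With the unboundedness in hand, the rest of the proof of \Cref{thm:main-general} applies verbatim and gives an exact monoidal functor $\overline F\:\cB\to\cT$ extending $F$. An exact linear monoidal functor between tensor categories is faithful, so $F=\overline F|_\cA$ is faithful. This gives (c)$\Rightarrow$(b). Functoriality on isomorphisms, and the fact that these constructions are mutually inverse, follow from the freeness of $\cA$ described above. I expect the main obstacle to be the step transferring unboundedness from $X$ to all its iterated duals, so that the hypotheses of \Cref{thm:main-general}, through \Cref{prop:decomp}, cover every standard object $\Delta(w^-_{s,n})$ and not only those with $s=0$.
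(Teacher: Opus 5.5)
Your architecture matches the paper's: (a)$\simeq$(b) from \Cref{cor:ab-env}, faithful $\Rightarrow$ unbounded via \Cref{prop:concentrated} and \Cref{cor:tilt-res}, and unbounded $\Rightarrow$ faithful via the $t$-exact extension $\tilde F$ plus faithfulness of tensor functors. But the step you flag as delicate is a genuine gap, and the mechanism you propose for it fails.

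The complex $\EE_n(X)^*$ is not $\EE_n(X^*)$ up to reindexing:
\begin{itemize}
\item $\EE_n(X)^*$ has degree-$0$ term $X^{*(1,2,\dots,n)}$, an \emph{ascending} word. Its differentials are duals of evaluations, i.e.\ coevaluations, and it lives in non-positive degrees.
\item $\EE_n(X^*)$ is built on the \emph{descending} word $(n,\dots,1)$, with evaluations.
\end{itemize}
For $n=2$, $H^0$ of the first is the cokernel of the coevaluation $\one\to X^*\o X^{**}$. $H^0$ of the second is the kernel of $\ev_{X^*}\colon X^{**}\o X^*\to\one$. In $\cB$ these complexes resolve $\Delta(w^-_{0,n})^*\cong\nabla(w^+_{1,n})$ and $\Delta(w^-_{1,n})$ respectively, which are different objects (weights $(1,\dots,n)$ versus $(n,\dots,1)$).

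So closure under duals, together with $\nabla\cong\Delta^*$ from \Cref{prop:decomp}, only takes you from $\Delta(w^-_{s,n})$ to the costandard objects $\nabla(w^+_{s+1,n})$. Double duals form a tensor autoequivalence, so $\EE_n(X^{**})\cong\EE_n(X)^{**}$, but they only shift $s$ by $\pm2$. Starting from $X$, you therefore never reach $\Delta(w^-_{s,n})$ for odd $s$. You need those for the tensor factorizations of all $\Delta(w)$ and for $\nabla(w^+_{s,n})\cong\Delta(w^-_{s-1,n})^*$.

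The missing ingredient is the triangle $\EE_{n+1}(X)\to\EE_n(X^*)\o X\to\EE_{n-1}(X^{**})\to$ from \cite{CE}*{Proposition~2.2.4(1)}. It comes from sorting the terms of $\EE_{n+1}(X)$ by whether the last evaluation position is used. The argument runs as follows:
\begin{itemize}
\item If $X$ is unbounded, so is $X^{**}$.
\item By \Cref{prop:EC}(b), both outer terms of the triangle are then concentrated in degree $0$ with non-zero cohomology.
\item The long exact sequence shows the same for $\EE_n(X^*)\o X$.
\item Since $X\neq0$, the functor $-\o X$ is exact and faithful on $\cT$, so $H^0\EE_n(X^*)\neq0$.
\end{itemize}
Right duals are handled analogously, or via ${}^*X\cong({}^{**}X)^*$. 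This is exactly how the paper closes the argument; with it inserted, the rest of your proof goes through as written.
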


\begin{proof}
The categories in (a) and (b) are equivalent because $\cB$ is an abelian envelope of $\cA$, and for any faithful linear monoidal functor $F\: \cA \to \cT$, the object $F(X_0)$ is unbounded by \Cref{prop:concentrated} and \Cref{cor:tilt-res}.
In order to show that the category in (c) is equivalent to those in (a) and (b), it now remains to observe that for an unbounded object $X$ of $\cT$, the unique (up to isomorphism) linear monoidal functor $F_X\:\cA\to\cT$ with $F_X(X_0) \cong X$ is faithful.
Note that unboundedness is preserved by taking double duals.
Using the triangles
$$
\EE_{n+1}(X)\to\EE_n(X^*)\o X\to\EE_{n-1}(X^{**})\to
\quad(n\ge1)
$$
from \cite{CE}*{Proposition~2.2.4(1)}, \Cref{prop:EC}(b), and exactness and faithfulness of $-\o X$, we see that unboundedness is also preserved by taking left duals, and analogously, by taking right duals.
Now using the equivalence \eqref{eq:tilting-equiv} and applying $F_X$ termwise gives rise to a monoidal triangulated functor
\[ \tilde F_X \colon D^b(\cB) \simeq K^b(\cA) \longrightarrow D^b(\cT) , \]
which sends all co-/standard objects to degree $0$ by \Cref{prop:decomp} and \Cref{cor:tilt-res}.
The $t$-exactness argument in the proof of \Cref{thm:main-general} implies that $\tilde F_X$ restricts to a tensor functor $\cB \to \cT$ extending $F_X \colon \cA \to \cT$.
As all tensor functors are faithful, we conclude that $F_X$ is faithful.
\end{proof}

\begin{remark} It can be seen that $\cB$ can be viewed as the category of finite-dimensional comodules of the free Hopf algebra $H$ with invertible antipode on the $2\times2$-matrix coalgebra, as discussed in \cites{Sch,Ch,RVdB}. 

Set $V:=\kk^2$ and $C:=V^*\o_\kk V$, the matrix coalgebra. $V$ is an unbounded object in the finite length category $\Vect\fd$ by \cite{CE}*{Theorem~4.3.1}, as it is not simple. Hence, by \Cref{rem:mapping-prop}, it defines a tensor functor $\omega\:\cB\to\Vect\fd$ such that $\omega(X_0)=V$. Now by reconstruction there is a Hopf algebra $L$ with bijective antipode and a monoidal equivalence $\cB\simeq\Comod\fd(L)$ compatible with $\omega$. It remains to show $L\cong H$.

For any Hopf algebra $K$ with bijective antipode, a coalgebra map $C\to K$ is the same as a right $K$-coaction on the fixed vector space $V$. Every such comodule is unbounded, as the forgetful functor $U_K\:\Comod\fd(K)\to\Vect\fd$ is exact and faithful. By \Cref{rem:mapping-prop}, the coalgebra map therefore determines an exact linear monoidal functor $G\:\cB\to\Comod\fd(K)$ together with a monoidal isomorphism $U_KG\cong\omega$, unique up to unique compatible monoidal isomorphism once the identification on $X_0$ is fixed. By reconstruction, these data are equivalent to a Hopf algebra map $L\to K$. Hence, we get a natural isomorphism
$$
\Hom_{\mathrm{Hopf}}(L,K)
\cong \Hom_{\mathrm{Coalg}}(C,K),
$$
which is the defining universal property of $H$, proving $L\cong H$, as asserted.
\end{remark}

%%%%%%%%%%%%%%%%%%%%%%%%%%%%%%%%%%%%%%%%%%%%%%%%%%%%%%%%%%%%%%%%%%%%%

\bibliography{bib}{}

@article {CEOP,
    AUTHOR = {Coulembier, Kevin and Etingof, Pavel and Ostrik, Victor and
              Pauwels, Bregje},
     TITLE = {Monoidal abelian envelopes with a quotient property},
   JOURNAL = {J. Reine Angew. Math.},
  FJOURNAL = {Journal f\"ur die Reine und Angewandte Mathematik. [Crelle's
              Journal]},
    VOLUME = {794},
      YEAR = {2023},
     PAGES = {179--214},
      ISSN = {0075-4102,1435-5345},
   MRCLASS = {18M05 (14L17)},
  MRNUMBER = {4529414},
MRREVIEWER = {Johannes\ Flake},
       DOI = {10.1515/crelle-2022-0076},
       URL = {https://doi.org/10.1515/crelle-2022-0076},
}

@article{FG,
      title={Monoidal Ringel duality and monoidal highest weight envelopes}, 
      author={Johannes Flake and Jonathan Gruber},
      year={2026},
      eprint={arXiv:2512.19558},
      archivePrefix={arXiv},
      primaryClass={math.RT},
      url={https://arxiv.org/abs/2512.19558}, 
}

@article {SS,
    AUTHOR = {Sam, Steven V. and Snowden, Andrew},
     TITLE = {The representation theory of {B}rauer categories {I}:
              {T}riangular categories},
   JOURNAL = {Appl. Categ. Structures},
  FJOURNAL = {Applied Categorical Structures. A Journal Devoted to
              Applications of Categorical Methods in Algebra, Analysis,
              Order, Topology and Computer Science},
    VOLUME = {30},
      YEAR = {2022},
    NUMBER = {6},
     PAGES = {1203--1256},
      ISSN = {0927-2852,1572-9095},
   MRCLASS = {18G99 (16G10 17B10 17B20 20G05)},
  MRNUMBER = {4519632},
MRREVIEWER = {Mee\ Seong\ Im},
       DOI = {10.1007/s10485-022-09689-7},
       URL = {https://doi.org/10.1007/s10485-022-09689-7},
}

@article {CE,
    AUTHOR = {Coulembier, Kevin and Etingof, Pavel},
     TITLE = {{$N$}-spherical functors and tensor categories},
   JOURNAL = {Int. Math. Res. Not. IMRN},
  FJOURNAL = {International Mathematics Research Notices. IMRN},
      YEAR = {2024},
    NUMBER = {14},
     PAGES = {10615--10649},
      ISSN = {1073-7928,1687-0247},
   MRCLASS = {18M05 (18N60)},
  MRNUMBER = {4775671},
       DOI = {10.1093/imrn/rnae093},
       URL = {https://doi.org/10.1093/imrn/rnae093},
}

@article {Ch,
    AUTHOR = {Chirvasitu, Alexandru},
     TITLE = {Grothendieck rings of universal quantum groups},
   JOURNAL = {J. Algebra},
  FJOURNAL = {Journal of Algebra},
    VOLUME = {349},
      YEAR = {2012},
     PAGES = {80--97},
      ISSN = {0021-8693,1090-266X},
   MRCLASS = {16T05 (16E20)},
  MRNUMBER = {2853627},
MRREVIEWER = {Zhihua\ Wang},
       DOI = {10.1016/j.jalgebra.2011.09.020},
       URL = {https://doi.org/10.1016/j.jalgebra.2011.09.020},
}

@article {RVdB,
    AUTHOR = {Raedschelders, Theo and Van den Bergh, Michel},
     TITLE = {The {M}anin {H}opf algebra of a {K}oszul {A}rtin-{S}chelter
              regular algebra is quasi-hereditary},
   JOURNAL = {Adv. Math.},
  FJOURNAL = {Advances in Mathematics},
    VOLUME = {305},
      YEAR = {2017},
     PAGES = {601--660},
      ISSN = {0001-8708,1090-2082},
   MRCLASS = {16S38 (16E60 16S37 16T20 18D10)},
  MRNUMBER = {3570144},
MRREVIEWER = {Ji-Wei\ He},
       DOI = {10.1016/j.aim.2016.09.017},
       URL = {https://doi.org/10.1016/j.aim.2016.09.017},
}

@article {BS,
    AUTHOR = {Brundan, Jonathan and Stroppel, Catharina},
     TITLE = {Semi-infinite highest weight categories},
   JOURNAL = {Mem. Amer. Math. Soc.},
  FJOURNAL = {Memoirs of the American Mathematical Society},
    VOLUME = {293},
      YEAR = {2024},
    NUMBER = {1459},
     PAGES = {vii+152},
      ISSN = {0065-9266,1947-6221},
      ISBN = {978-1-4704-6783-8; 978-1-4704-7716-5},
   MRCLASS = {16-02 (08C20 16G10 17B10 17B67 18E10 20G05 20G42)},
  MRNUMBER = {4684337},
       DOI = {10.1090/memo/1459},
       URL = {https://doi.org/10.1090/memo/1459},
}

@article {CSV,
    AUTHOR = {Coulembier, Kevin and Street, Ross and van den Bergh, Michel},
     TITLE = {Freely adjoining monoidal duals},
   JOURNAL = {Math. Structures Comput. Sci.},
  FJOURNAL = {Mathematical Structures in Computer Science. A Journal in the
              Applications of Categorical, Algebraic and Geometric Methods
              in Computer Science},
    VOLUME = {31},
      YEAR = {2021},
    NUMBER = {7},
     PAGES = {748--768},
      ISSN = {0960-1295,1469-8072},
   MRCLASS = {18M05},
  MRNUMBER = {4386829},
MRREVIEWER = {Eliezer\ Batista},
       DOI = {10.1017/S0960129520000274},
       URL = {https://doi.org/10.1017/S0960129520000274},
}

@incollection {Sch,
    AUTHOR = {Schauenburg, Peter},
     TITLE = {Faithful flatness over {H}opf subalgebras: counterexamples},
 BOOKTITLE = {Interactions between ring theory and representations of
              algebras ({M}urcia)},
    SERIES = {Lecture Notes in Pure and Appl. Math.},
    VOLUME = {210},
     PAGES = {331--344},
 PUBLISHER = {Dekker, New York},
      YEAR = {2000},
      ISBN = {0-8247-0367-7},
   MRCLASS = {16W30},
  MRNUMBER = {1761130},
MRREVIEWER = {Stefaan\ Caenepeel},
}

@misc{astra,
  author = {OpenAI},
  title = {{G}{P}{T}-6 {A}stra: The next generation in intelligence for work},
  year = {2026},
  url = {https://openai.com/index/gpt-6-astra-next-generation-work/},
  note = {Accessed: Aug--Sep 2026}
}

@misc{opus,
  author       = {{Anthropic}},
  title        = {{C}laude {O}pus ({V}ersion 5)},
  year         = {2026},
  url          = {https://www.anthropic.com},
  note         = {Accessed: Sep 2026}
}

@article {BEO,
    AUTHOR = {Benson, Dave and Etingof, Pavel and Ostrik, Victor},
     TITLE = {New incompressible symmetric tensor categories in positive
              characteristic},
   JOURNAL = {Duke Math. J.},
  FJOURNAL = {Duke Mathematical Journal},
    VOLUME = {172},
      YEAR = {2023},
    NUMBER = {1},
     PAGES = {105--200},
      ISSN = {0012-7094,1547-7398},
   MRCLASS = {18M05},
  MRNUMBER = {4533718},
MRREVIEWER = {Xiao-Kan\ Guo},
       DOI = {10.1215/00127094-2022-0030},
       URL = {https://doi.org/10.1215/00127094-2022-0030},
}

@article {Cou-hom,
    AUTHOR = {Coulembier, Kevin},
     TITLE = {Homological kernels of monoidal functors},
   JOURNAL = {Selecta Math. (N.S.)},
  FJOURNAL = {Selecta Mathematica. New Series},
    VOLUME = {29},
      YEAR = {2023},
    NUMBER = {2},
     PAGES = {Paper No. 24, 46},
      ISSN = {1022-1824,1420-9020},
   MRCLASS = {18M15 (18D15 18F10 18N40)},
  MRNUMBER = {4549962},
       DOI = {10.1007/s00029-023-00829-y},
       URL = {https://doi.org/10.1007/s00029-023-00829-y},
}

@article {Cou,
    AUTHOR = {Coulembier, Kevin},
     TITLE = {Monoidal abelian envelopes},
   JOURNAL = {Compos. Math.},
  FJOURNAL = {Compositio Mathematica},
    VOLUME = {157},
      YEAR = {2021},
    NUMBER = {7},
     PAGES = {1584--1609},
      ISSN = {0010-437X,1570-5846},
   MRCLASS = {18D15 (18F10 18F20 18M05 20G05)},
  MRNUMBER = {4277111},
MRREVIEWER = {Johannes\ Flake},
       DOI = {10.1112/S0010437X21007399},
       URL = {https://doi.org/10.1112/S0010437X21007399},
}

@article {CEAH,
    AUTHOR = {Coulembier, Kevin and Entova-Aizenbud, Inna and Heidersdorf,
              Thorsten},
     TITLE = {Monoidal abelian envelopes and a conjecture of {B}enson and
              {E}tingof},
   JOURNAL = {Algebra Number Theory},
  FJOURNAL = {Algebra \& Number Theory},
    VOLUME = {16},
      YEAR = {2022},
    NUMBER = {9},
     PAGES = {2099--2117},
      ISSN = {1937-0652,1944-7833},
   MRCLASS = {18M05 (14L15 16D90)},
  MRNUMBER = {4523326},
MRREVIEWER = {Daniel\ Ion\ Bulacu},
       DOI = {10.2140/ant.2022.16.2099},
       URL = {https://doi.org/10.2140/ant.2022.16.2099},
}

@article{FLP-abenv,
      title={Monoidal adjunctions and abelian envelopes}, 
      author={Johannes Flake and Robert Laugwitz and Sebastian Posur},
      year={2026},
      eprint={2601.16092},
      archivePrefix={arXiv},
      primaryClass={math.RT},
      url={https://arxiv.org/abs/2601.16092}, 
}
\bibliographystyle{plain}

\end{document}